\newtheorem*{theorem*}{Main Theorem}
\newtheorem*{theorem**}{Theorem}
 \newtheorem{theorem}{Theorem}[section]
 \newtheorem{corollary}[theorem]{Corollary}
 \newtheorem{lemma}[theorem]{Lemma}
 \newtheorem{proposition}[theorem]{Proposition}
 \theoremstyle{definition}
 \newtheorem{example}[theorem]{Example}
 \newtheorem{remark}[theorem]{Remark}
 \theoremstyle{remark}
  \numberwithin{equation}{section}
\newcommand{\wh}{\widetilde{w}}
\newcommand{\tetah}{\widetilde{\vartheta}}
\newcommand{\xk}{\hat{x}}
\newcommand{\xv}{\check{x}}
\newcommand{\ck}{\hat{c}}
\newcommand{\lk}{\hat{\lambda}}
\newcommand{\bx}{\mathscr{B}_X}
\newcommand{\me}{\mathcal{P}_X}
\newcommand{\Fx}{\mathcal{F}_X}
\newcommand{\spm}{S_{\mu}}
\newcommand{\spn}{S_{\nu}}
\newcommand{\spvt}{S_{\vartheta}}
\newcommand{\bex}{\overline{B_{\varepsilon}(x)}}
\newcommand{\brx}{\overline{B_{\rho_r}(x)}}
\newcommand{\bkx}{\overline{B_{\rho_k}(x)}}
\newcommand{\tteta}{\widetilde{\vartheta}}
\newcommand{\ww}{\widetilde{w}}
\begin{document}

\title[A char. of isometries with respect to the L{\'e}vy--Prokhorov metric]{A characterisation of isometries with respect to the L{\'e}vy--Prokhorov metric}

\author[Gy.P. Geh{\'e}r]{Gy{\"o}rgy P{\'a}l Geh{\'e}r}

\address{%
Gy{\"o}rgy P{\'a}l Geh{\'e}r\newline 
MTA-SZTE Analysis and Stochastics Research Group \newline 
Bolyai Institute\\ University of Szeged\newline 
Aradi v{\'e}rtan{\'u}k tere 1.\\ Szeged H-6720\\ Hungary
\newline
\newline
MTA-DE \lq\lq Lend{\"u}let'' Functional Analysis Research Group \newline 
Institute of Mathematics\\ University of Debrecen \newline 
P.O. Box 12, Debrecen H-4010, Hungary}
\email{gehergyuri@gmail.com or gehergy@math.u-szeged.hu}

\author[T. Titkos]{Tam\'as Titkos}
%\fntext[Otkaszam]{Tam\'as Titkos was supported by OTKA-NKFI grant  K104206.}
\address{Tam{\'a}s Titkos\newline Alfr\'ed R\'enyi Institute of Mathematics, Hungarian Academy of Sciences \newline Re\'altanoda utca 13-15.\\ Budapest H-1053\\ Hungary}

\email{titkos.tamas@renyi.mta.hu}
\thanks{Gy{\"o}rgy P{\'a}l Geh{\'e}r was also supported by the \lq\lq Lend\" ulet'' Program (LP2012-46/2012) of the Hungarian Academy of Sciences, and by the Hungarian National Research, Development and Innovation Office -- NKFIH (grant no.~K115383).}
\thanks{Tam\'as Titkos was also supported by the \lq\lq Lend\" ulet'' Program (LP2012-46/2012) of the Hungarian Academy of Sciences, and by the Hungarian National Research, Development and Innovation Office -- NKFIH (grant no. ~K104206).}
%----------classification, keywords, date
\subjclass[2010]{Primary: 46B04, 46E27, 47B49, 54E40, 60B10; Secondary: 28A33, 60A10, 60B05.}

\keywords{Borel probability measures; Weak convergence; L{\'e}vy--Prokhorov metric; Isometries; Banach--Stone theorem.}

\maketitle

\begin{abstract} 

According to the fundamental work of Yu.V. Prokhorov, the general theory of stochastic processes can be regarded as the theory of probability measures in complete separable metric spaces.
Since stochastic processes depending upon a continuous parameter are basically probability measures on certain subspaces of the space of all functions of a 
real variable, a particularly important case of this theory is when the underlying metric space has a linear structure. 
Prokhorov also provided a concrete metrisation of the topology of weak convergence today known as the L{\'e}vy--Prokhorov distance. 
Motivated by these facts, the famous Banach--Stone theorem, and some recent works related to characterisations of onto isometries of spaces of Borel probability measures, here we give a complete description of surjective isometries with respect to the L{\'e}vy--Prokhorov metric in case when the underlying metric space is a separable Banach space.
Our result can be considered as a generalisation of L. Moln\'ar's earlier Banach--Stone-type result which characterises onto isometries of the space of all probability distribution functions on the real line wit respect to the L{\'e}vy distance.
However, the present more general setting requires the development of an essentially new technique.

\end{abstract}

%%%%%%%%%%%%%%%%%%%%%%%%%%%
%%%%%%%%%%%%%%%%%%%%%%%%%%%
%%%%%%%%%%%%%%%%%%%%%%%%%%%

\section{Introduction}
There is a long history and vast literature of isometries (i.e. not necessarily sujective distance preserving maps) on different kind of metric spaces.
Two classical results in the case of normed linear spaces are the Mazur--Ulam theorem which states that every surjective isometry between real normed spaces is automatically affine (i.e. linear up to translation), and the Banach--Stone theorem which provides the structure of onto linear isometries between Banach spaces of continuous scalar-valued functions on compact Hausdorff spaces.
Since then several properties of surjective linear isometries on different types of normed spaces have been explored, see for instance the papers \cite{lin1,lin2,kelle7,kelle1,lin4,kelle3,lin8} and the extensive books \cite{FJ1,FJ2}.
The reader can find similar results on non-linear spaces for example in \cite{kelle2,BJM,GS,kelle4,kelle5,kelle6}.

The starting point of our investigation is Moln\'ar's paper \cite{ML-Levy} where a complete description of surjective L{\'e}vy isometries of the non-linear space $\mathcal{D}(\mathbb{R})$ of all cumulative distribution functions was given. 
If $F,G\in\mathcal{D}(\mathbb{R})$, then their \emph{L{\'e}vy distance} is defined by the following formula:
\begin{align*}
L(F,G) := \inf\left\{\varepsilon>0\,\big|\,\forall\; t\in\mathbb{R}\colon F(t-\varepsilon)-\varepsilon\leq G(t)\leq F(t+\varepsilon)+\varepsilon\right\}.
\end{align*}
The importance of this metric lies in the fact that it metrises the topology of weak convergence on $\mathcal{D}(\mathbb{R})$.
Moln\'ar's result reads as follows (see \cite[Theorem 1]{ML-Levy}): 
\emph{let $\Phi\colon \mathcal{D}(\mathbb{R})\to \mathcal{D}(\mathbb{R})$ be a surjective L{\'e}vy isometry, i.e., a bijective map satisfying
\begin{align*}
L(F,G)=L(\Phi(F),\Phi(G)) \qquad (\forall\;F,G\in \mathcal{D}(\mathbb{R})).
\end{align*}
Then there is a constant $c\in\mathbb{R}$ such that $\Phi$ is one of the following two forms:
\begin{align*}
\Phi(F)(t)=F(t+c)\qquad(\forall\; t\in\mathbb{R}, F\in\mathcal{D}(\mathbb{R})),
\end{align*}
or
\begin{align*}
\Phi(F)(t)=1-\lim\limits_{s\to t-}F(-s+c)\qquad(\forall\; t\in\mathbb{R}, F\in\mathcal{D}(\mathbb{R})).
\end{align*}}
In other words, every surjective L{\'e}vy isometry is induced by an isometry of $\mathbb{R}$ with respect to its usual norm (or equivalently, by a composition of a translation and a reflection on $\mathbb{R}$).

The investigation of surjective isometries on spaces of Borel probability measures was continued for example in \cite{DoMo,ML2} for the Kolmogorov--Smirnov distance which is important in the Kolmogorov--Smirnov statistic and test, and in \cite{Wasserstein2,Wasserstein3,Wasserstein} with respect to the Wasserstein (or Kantorovich) metric which metrises the weak convergence.

Let $(X,d)$ be a complete and separable metric space.
We will denote the \emph{$\sigma$-algebra of Borel sets} on $X$ by $\mathscr{B}_X$ and the \emph{set of all Borel probability measures} by $\me$.
The L{\'e}vy distance gives a metrisation of weak convergence on $\mathcal{D}({\mathbb{R}})$, or equivalently on $\mathcal{P}_{\mathbb{R}}$.
In 1956 Prokhorov managed to metrise the weak convergence of $\me$ for general complete and separable metric spaces $(X,d)$.
The so-called \emph{L{\'e}vy--Prokhorov distance} which was introduced by him in \cite{Pr} is defined by
\begin{equation}\label{LPdef}
\pi(\mu,\nu):= \inf\left\{\varepsilon>0\,\big|\,\forall\; A\in\bx\colon\mu(A)\leq\nu(A^{\varepsilon})+\varepsilon\right\}, 
\end{equation}
where 
\begin{equation*}
A^{\varepsilon} := \bigcup\limits_{x\in A} B_\varepsilon(x) \qquad \text{and} \qquad B_\varepsilon(x) := \{z\in X\,|\,d(x,z)<\varepsilon\}.
\end{equation*}
For the details and elementary properties see e.g. \cite[p. 27]{Hu}.
Let us point out that in the special case when $X = \mathbb{R}$ this metric differs from the original L{\'e}vy distance.
Here arises the following very natural question: 
\begin{center}
\emph{What is the structure of onto isometries with respect to the L{\'e}vy--Prokhorov metric on $\me$ if $X$ is a general separable real Banach space?}
\end{center}
This paper is devoted to give an answer to this question. 
Namely, we will prove that every such transformation is induced by an affine isometry of the underlying space $X$.

There are some particularly important cases in our investigation which we emphasise now.
Namely, since stochastic processes depending upon a continuous parameter are basically probability measures on certain subspaces of the space of all functions of a real variable (see e.g. \cite{Ambrose,Doob}), one particularly interesting case is when the underlying Banach space is $C([0,1])$, i.e. the space of all continuous real-valued functions on $[0,1]$ endowed with the uniform norm $\|\cdot\|_{\infty}$. 
For details see \cite[Chapter 2]{Pr} or \cite[Chapter 2]{Bi}.
Further two important cases are when $X$ is a Euclidean space because of multivariate random variables, and when $X$ is an infinite dimensional, separable real Hilbert space because of the theory of random elements in Hilbert spaces.

%%%%%%%%%%%%%%%%%%%%%%%%%%%
%%%%%%%%%%%%%%%%%%%%%%%%%%%

\section{The setting and the statment of our main result}\label{2}
In this section we state the main result of the paper and collect some definitions and well-known facts about weak convergence of Borel probability measures.
For more details the reader is referred to the textbooks of Billingsley \cite{Bi}, Huber \cite{Hu} and Parthasarathy \cite{Pa}.

Let $(X,d)$ be a complete, separable metric space and denote by $C_b(X;\mathbb{R})$ the Banach space of all real-valued bounded continuous functions.
Recall that $\bx$ is the smallest $\sigma$-algebra with respect to each $f\in C_b(X;\mathbb{R})$ is measurable.
We say that an element of $\me$ is a \emph{Dirac measure} if it is concentrated on one point, and for an $x\in X$ the symbol $\delta_x$ stands for the corresponding Dirac measure.
The set of all Dirac measures on $X$ is denoted by $\Delta_X$. 
The collection of all \emph{finitely supported measures} is
\begin{equation*}
\mathcal{F}_X := 
\left\{\sum\limits_{i\in I}\lambda_i\delta_{x_i}\,\Big|\,\# I<\aleph_0, ~\sum\limits_{i\in I}\lambda_i=1,~\lambda_i>0,\,x_i\in X~(\forall\;i\in I)\right\},
\end{equation*}
which is actually the convex hull of $\Delta_X$. 
The \emph{support} (or spectrum) of $\mu\in\me$ is the smallest $d$-closed set $S_{\mu}$ that satisfies $\mu(S_{\mu}) = 1$. 
Moreover, it is not hard to verify the following equation:
\begin{equation*}
S_{\mu} = \left\{x\in X\,\big|\,\forall\; r>0\colon \mu(B_r(x))>0\right\}.
\end{equation*}
The \emph{closure} of a set $H\subseteq X$ will be denoted by $\overline{H}$. 

We say that a sequence of measures $\{\mu_n\}_{n=1}^\infty \subset \me$ \emph{converges weakly} to a $\mu\in\me$ if we have
$$
\int f~d\mu_n \to \int f~d\mu\qquad (\forall\; f\in C_b(X;\mathbb{R})).
$$
This type of convergence is metrised by the L{\'e}vy--Prokhorov metric given by \eqref{LPdef}.
A map $\varphi\colon\me\to\me$ is called a \emph{$\pi$-isometry} on $\me$ if
$$
\pi(\mu,\nu) = \pi(\varphi(\mu),\varphi(\nu)) \qquad (\forall\;\mu,\nu\in\me)
$$
is satisfied.

Now, we are in the position to state the main result of this paper. 

\begin{theorem*}
Let $(X,\|\cdot\|)$ be a separable real Banach space and $\varphi\colon\me\to\me$ be a surjective $\pi$-isometry.
Then there exists a surjective affine isometry $\psi\colon X\to X$ which induces $\varphi$, i.e. we have
\begin{align}\label{MT IMPL}
\left(\varphi(\mu)\right)(A) = \mu(\psi^{-1}[A]) \qquad (\forall\; A\in\bx),
\end{align}
where $\psi^{-1}[A]$ denotes the inverse-image set $\{\psi^{-1}(a)\,|\,a\in A\}$.
\end{theorem*}

The converse of the above statement is trivial, namely, every transformation of the form \eqref{MT IMPL} is obviously an onto $\pi$-isometry.
Note that our theorem can be re-phrased in terms of push-forward measures.
Namely, the action of $\varphi$ is just the push-forward with respect to the isometry $\psi\colon X\to X$.

%Namely, the measure given by \eqref{MT IMPL} is the push-forward measure of $\mu$ with respect to the measurable mapping $\psi\colon X\to X$.

As we already mentioned, the L{\'e}vy--Prokhorov metric on $\mathcal{P}_\mathbb{R}$ differs from the L{\'e}vy distance on $\mathcal{P}_\mathbb{R}$. 
Therefore, in the special case when $X = \mathbb{R}$, our hypotheses are different from those given in \cite[Theorem 1]{ML-Levy}, although the conclusion is the same.

Our proof is given in the next section where we will have four major steps.
This will be followed by some remarks in the final section, where we will also point out that our Main Theorem still holds if we replace $\me$ with an arbitrary weakly dense subset $\mathcal{S}$.

%%%%%%%%%%%%%%%%%%%%%%%%%%%
%%%%%%%%%%%%%%%%%%%%%%%%%%%
%%%%%%%%%%%%%%%%%%%%%%%%%%%
%%%%%%%%%%%%%%%%%%%%%%%%%%%
%%%%%%%%%%%%%%%%%%%%%%%%%%%

\section{Proof}\label{3}

The proof is divided into four major steps.
First, we will explore the action of $\varphi$ on $\Delta_X$. 
Then for finitely supported measures $\mu$ we will investigate the behaviour of its image $\varphi(\mu)$ near to the vertices of the convex hull of $S_\mu$.
This will be followed by providing a procedure which will allow us to obtain important information about the \lq\lq rest'' of $\varphi(\mu)$.
Finally, we close this section with the proof of the Main Theorem.
Note that although our main result deals with Borel probability measures on separable Banach spaces, we state and prove some results in the context of complete and separable metric spaces. 

%%%%%%%%%%%%%%%%%%%%%%%%%%%

\subsection{First major step: the action on Dirac measures}

Here we will investigate properties of the restricted map $\varphi |_{\mathcal{D}_X}$.
Namely, we will prove that $\varphi$ maps $\Delta_X$ onto $\Delta_X$, furthermore, there is a surjective affine isometry of $X$ which induces this restriction.
In order to do this first, we formulate the metric phrase \lq\lq distance one'' by means of the supports of measures.

\begin{proposition}\label{1-tav}
Let $(X,d)$ be a complete, separable metric space and $\mu,\nu\in\me$. Then the following statements are equivalent:
\begin{itemize}
\item[(i)] $\pi(\mu,\nu)=1$,
\item[(ii)] $\underline{\mathrm{d}}(S_{\mu},S_{\nu}) := \inf\left\{d(x,y)\,|\,x\in S_{\mu},~y\in S_{\nu}\right\}\geq 1$,
\item[(iii)] $S_{\nu}\cap S_{\mu}^1=\emptyset$,
\item[(iv)] $S_{\mu}\cap S_{\nu}^1=\emptyset$.
\end{itemize}
\end{proposition}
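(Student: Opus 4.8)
The plan is to first dispatch the three purely metric conditions (ii), (iii), (iv) against one another, and only then to connect them to the metric statement (i). By the definition $S_{\mu}^{1}=\bigcup_{x\in S_{\mu}}B_1(x)$, a point $y$ lies in $S_{\mu}^{1}$ exactly when $d(x,y)<1$ for some $x\in S_{\mu}$. Hence (iii), i.e. $S_{\nu}\cap S_{\mu}^{1}=\emptyset$, says precisely that $d(x,y)\geq 1$ for all $x\in S_{\mu}$ and $y\in S_{\nu}$, and a lower bound of an infimum being $\geq 1$ is the same as every element being $\geq 1$; this is condition (ii). Since $d$ is symmetric, the quantity $\underline{\mathrm{d}}(S_{\mu},S_{\nu})$ is unchanged when $\mu$ and $\nu$ are swapped, so the identical reasoning gives (ii)$\Leftrightarrow$(iv). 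Thus (ii), (iii), (iv) are equivalent essentially by unravelling the definitions.

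It remains to prove (i)$\Leftrightarrow$(ii). First I record that $\pi(\mu,\nu)\leq 1$ always, since $\mu(A)\leq 1\leq\nu(A^{1})+1$ for every $A\in\bx$, so $\varepsilon=1$ is admissible in \eqref{LPdef}. For (ii)$\Rightarrow$(i) I would test the defining inequality on the closed (hence Borel) set $A=S_{\mu}$. For any $\varepsilon<1$ we have $S_{\mu}^{\varepsilon}\subseteq S_{\mu}^{1}$, and (iii) forces $S_{\mu}^{\varepsilon}\cap S_{\nu}=\emptyset$, whence $\nu(S_{\mu}^{\varepsilon})=0$ while $\mu(S_{\mu})=1$. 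Therefore $\mu(S_{\mu})=1>\varepsilon=\nu(S_{\mu}^{\varepsilon})+\varepsilon$, so no $\varepsilon<1$ is admissible and $\pi(\mu,\nu)\geq 1$; combined with the upper bound this yields $\pi(\mu,\nu)=1$.

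The substantive direction is (i)$\Rightarrow$(ii), which I would establish in contrapositive form: assuming $\underline{\mathrm{d}}(S_{\mu},S_{\nu})<1$, I must produce a single admissible $\varepsilon<1$, that is, verify $\mu(A)\leq\nu(A^{\varepsilon})+\varepsilon$ for \emph{every} Borel $A$. Choose $x_0\in S_{\mu}$ and $y_0\in S_{\nu}$ with $r:=d(x_0,y_0)<1$, fix $\varepsilon_1\in(r,1)$, and set $\sigma=\rho=(\varepsilon_1-r)/2>0$. The key estimate splits according to whether $A$ meets the small ball $B_\sigma(x_0)$. If it does, pick $a\in A\cap B_\sigma(x_0)$; then for every $w\in B_\rho(y_0)$ the triangle inequality gives $d(a,w)<\sigma+r+\rho=\varepsilon_1$, so $B_\rho(y_0)\subseteq A^{\varepsilon_1}$ and hence $\nu(A^{\varepsilon_1})\geq\nu(B_\rho(y_0))=:\eta$, which is strictly positive because $y_0\in S_{\nu}$. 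If $A$ misses $B_\sigma(x_0)$, then $\mu(A)\leq 1-\mu(B_\sigma(x_0))=1-\beta$ with $\beta>0$ because $x_0\in S_{\mu}$. In both cases $\mu(A)-\nu(A^{\varepsilon_1})\leq 1-\min\{\eta,\beta\}$ uniformly in $A$. Taking $\varepsilon=\max\{\varepsilon_1,\,1-\min\{\eta,\beta\}\}<1$ and using $A^{\varepsilon_1}\subseteq A^{\varepsilon}$ then gives $\mu(A)\leq\nu(A^{\varepsilon})+\varepsilon$ for all $A$, so $\pi(\mu,\nu)\leq\varepsilon<1$, as required.

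The main obstacle is precisely this last step. A single pair of nearby support points may carry arbitrarily little mass, and indeed $\pi(\mu,\nu)$ can be pushed as close to $1$ as one wishes in this situation, so there is no hope of bounding $\pi$ in terms of $r$ directly. The two-case dichotomy is what does the work: when $A$ reaches toward $x_0$ its $\varepsilon_1$-enlargement captures a fixed positive amount $\eta$ of $\nu$-mass near $y_0$, whereas when $A$ retreats from $x_0$ it forfeits a fixed positive amount $\beta$ of $\mu$-mass. This converts the mere positivity of the measures on small balls about support points — the only quantitative information the support definition supplies — into a uniform gap below $1$.
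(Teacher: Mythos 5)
Your proposal is correct and takes essentially the same approach as the paper: for (ii)$\Rightarrow$(i) you test the defining inequality on $A=S_\mu$ just as the paper does, and for the contrapositive of (i)$\Rightarrow$(ii) you pick nearby support points, surround them with small balls of positive mass, and take the admissible $\varepsilon=\max\{\varepsilon_1,\,1-\min\{\eta,\beta\}\}$, which matches the paper's $\hat\varepsilon=\max\{1-t,\varrho'+2r\}$ with $t=\min\{\mu(B_r(x^*)),\nu(B_r(y^*))\}$. The only cosmetic difference is that you split cases on whether $A$ meets $B_\sigma(x_0)$ while the paper splits on whether $\mu(A)>1-t$; these are the same dichotomy stated in dual form.
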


\begin{proof}
Observe that (ii) implies the following inequality for every $0<\varepsilon<1$:
\begin{equation*}
1=\mu(\spm)>\nu(\spm^{\varepsilon})+\varepsilon=\varepsilon.
\end{equation*}
Consequently we have $\pi(\mu,\nu)\geq 1$. But on the other hand, $\pi(\mu,\nu)\leq1$ holds for all $\mu,\nu\in\me$, and therefore the (ii)$\Rightarrow$(i) part is complete. 

To prove (i)$\Rightarrow$(ii) assume that $\varrho := \underline{\mathrm{d}}(\spm,\spn) < 1$. In this case one can fix two points $x^*\in\spm$ and $y^*\in\spn$, and a positive number $r>0$ which satisfy both 
\begin{align*}
\varrho\leq d(x^*,y^*)=:\varrho'<1\quad\mbox{and}\quad\varrho'+2r<1.
\end{align*}
We also set $t := \min\left\{\mu\left(B_r(x^*)\right),\nu\left(B_r(y^*)\right)\right\}$ which is clearly positive by the very definition of the support. We will show that $\hat\varepsilon:=\max\{1-t,\varrho'+2r\}<1$ is a suitable choice to guarantee 
\begin{equation*}
\mu(A)\leq\nu(A^{\hat\varepsilon})+\hat\varepsilon \qquad (\forall\; A\in\bx)
\end{equation*}
Indeed, if $A\in\bx$ satisfies $\mu(A)\leq 1-t$, then
\begin{equation*}
\mu(A)\leq 1-t\leq\nu(A^{1-t})+1-t\leq\nu(A^{\hat\varepsilon})+\hat\varepsilon.
\end{equation*}
On the other hand, if $\mu(A)>1-t$, then we observe that $\mu(A\cap B_r(x^*))>0$, and consequently $A\cap B_r(x^*)$ is not empty. 
Let us fix a point $z\in A\cap B_r(x^*)$. 
Using the triangle inequality we infer $d(y^*,z)\leq d(y^*,x^*)+d(x^*,z) < \varrho'+r$ and $B_r(y^*)\subseteq B_{\varrho'+2r}(z)\subseteq A^{\hat{\varepsilon}}$. 
Therefore we conclude
$$
\mu(A)\leq 1\leq t+\hat{\varepsilon}\leq\nu(B_r(y^*))+\hat{\varepsilon}\leq\nu(A^{\hat{\varepsilon}})+\hat{\varepsilon},
$$
which implies $\pi(\mu,\nu)\leq\hat{\varepsilon}<1$.

The equivalence of (ii), (iii) and (iv) follows from the definitions.
\end{proof}

Next, let us define the \emph{unit distance set} of a set of measures $\mathscr{A} \subseteq\me$ by
\begin{equation*}
\mathscr{A}^{\mathbbm{u}} = \left\{\nu\in\me\,\big|\,\forall\;\mu\in\mathscr{A}\colon\pi(\mu,\nu)=1\right\}.
\end{equation*}
(Remark that by definition we have $\emptyset^{\mathbbm{u}} = \me$.)
The following statement gives a metric characterisation of Dirac measures when $X$ is a separable real Banach space.
We point out that similar results were also crucial ideas in \cite{DoMo,ML-Levy,ML2}.

\begin{proposition}\label{L:metric char}
Let $(X,d)$ be a complete, separable metric space and $\mu\in\me$ be an arbitrary Borel probability measure on it. 
Then the following three statements are equivalent:
\begin{itemize}
\item[(i)] $\left(\{\mu\}^{\mathbbm{u}}\right)^{\mathbbm{u}}=\{\mu\}$,
\item[(ii)] there exists an $x\in X$ such that
\begin{itemize}
\item[(ii/a)] $\mu=\delta_x$, and
\item[(ii/b)] $B_1(y)\subseteq B_1(x)$ implies $x=y$ for every $y\in X$,
\end{itemize}
\item[(iii)] $\#\left(\{\mu\}^{\mathbbm{u}}\right)^{\mathbbm{u}} = 1$.
\end{itemize}
\end{proposition}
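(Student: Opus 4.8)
The plan is to reduce everything to an explicit description of the twice-iterated unit distance set $(\mue)\eee$ in terms of the support $\spm$, after which all three equivalences fall out by short arguments. Note first that $\mu\in(\mue)\eee$ always holds: every $\nu\in\mue$ satisfies $\pi(\mu,\nu)=1$ by definition, so $\mu$ is at distance one from all of $\mue$. Consequently (i)$\Leftrightarrow$(iii) is immediate, since cardinality one together with $\mu\in(\mue)\eee$ forces $(\mue)\eee=\{\mu\}$, and the converse is trivial. The real content is therefore the equivalence of (i) and (ii).

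To compute $(\mue)\eee$ I would work entirely through Proposition~\ref{1-tav}, which turns the relation $\pi(\cdot,\cdot)=1$ into a statement about supports. First, $\nu\in\mue$ iff $S_{\nu}\cap\spm^1=\emptyset$, equivalently $\nu(\spm^1)=0$; in particular the Dirac measures lying in $\mue$ are exactly the $\delta_y$ with $y\notin\spm^1$. The key reduction is that a measure $\rho$ lies at distance one from \emph{every} $\nu\in\mue$ if and only if it lies at distance one from every such Dirac $\delta_y$. Indeed, for $\nu\in\mue$ one has $S_{\nu}^1=\bigcup_{y\in S_{\nu}}B_1(y)$ with each $y\in S_{\nu}\subseteq X\setminus\spm^1$, so $S_{\rho}\cap S_{\nu}^1=\emptyset$ holds as soon as $S_{\rho}\cap B_1(y)=\emptyset$ for all admissible $y$. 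Applying Proposition~\ref{1-tav} once more, $\pi(\rho,\delta_y)=1$ means $S_{\rho}\cap B_1(y)=\emptyset$, and running over all $y\notin\spm^1$ yields
\begin{equation*}
(\mue)\eee=\left\{\rho\in\me\,\big|\,S_{\rho}\subseteq Z\right\},\quad\text{where}\quad Z:=\left\{z\in X\,\big|\,B_1(z)\subseteq\spm^1\right\},
\end{equation*}
using the set-theoretic identity $X\setminus\bigcup_{y\notin\spm^1}B_1(y)=\{z\,|\,B_1(z)\subseteq\spm^1\}$.

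With this formula in hand the equivalence of (i) and (ii) becomes a matter of understanding $Z$, and I note throughout that $\spm\subseteq Z$. For (ii)$\Rightarrow$(i): if $\mu=\delta_x$ and (ii/b) holds, then $\spm^1=B_1(x)$ and $Z=\{z\,|\,B_1(z)\subseteq B_1(x)\}=\{x\}$ by (ii/b), so the only measure supported in $Z$ is $\delta_x=\mu$. For the contrapositive of (i)$\Rightarrow$(ii) I would split into two cases. If $\mu$ is not a Dirac measure, pick any $a\in\spm$; then $\delta_a\in(\mue)\eee$ because $\{a\}\subseteq\spm\subseteq Z$, and $\delta_a\neq\mu$, so (i) fails. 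If $\mu=\delta_x$ but (ii/b) fails, there is some $y\neq x$ with $B_1(y)\subseteq B_1(x)=\spm^1$, hence $y\in Z$ and $\delta_y\in(\mue)\eee\setminus\{\mu\}$, so again (i) fails.

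The main obstacle I anticipate is the reduction in the second paragraph showing that testing the unit-distance condition only against the Dirac members of $\mue$ recovers all of $(\mue)\eee$; this is where Proposition~\ref{1-tav} and the distributivity of $S_{\rho}\cap(\cdot)$ over the union defining $S_{\nu}^1$ must be combined carefully, and keeping the open/closed ball bookkeeping (strict versus non-strict inequalities in $d$) straight is the only delicate point. Everything else is routine. I would also record the remark that in a normed space condition (ii/b) is automatic, so that there (ii) collapses to ``$\mu$ is a Dirac measure,'' which is presumably how the proposition is meant to be used in the Banach setting.
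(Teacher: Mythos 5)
Your proposal is correct and takes essentially the same route as the paper: your description $\left(\{\mu\}^{\mathbbm{u}}\right)^{\mathbbm{u}}=\{\rho\in\me \mid S_\rho\subseteq Z\}$ is precisely the paper's equivalence $\vartheta\in\left(\{\mu\}^{\mathbbm{u}}\right)^{\mathbbm{u}}\iff S_\vartheta^1\subseteq S_\mu^1$ derived from Proposition \ref{1-tav}, with the reduction to testing against Dirac measures (which the paper leaves implicit) written out explicitly. The remaining steps --- (i)$\Leftrightarrow$(iii) via $\mu\in\left(\{\mu\}^{\mathbbm{u}}\right)^{\mathbbm{u}}$, and both directions of (i)$\Leftrightarrow$(ii) using $S_\mu\subseteq Z$ and condition (ii/b) --- match the paper's argument up to your contrapositive phrasing.
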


\begin{proof}
First, let us characterise the elements of $\left(\{\mu\}^{\mathbbm{u}}\right)^{\mathbbm{u}}$. 
It follows from Proposition \ref{1-tav} that
\begin{equation*}
\{\mu\}^{\mathbbm{u}} = \left\{ \nu\in \me \,\big|\,\spn^1 \cap S_{\mu} = \emptyset\right\} = \left\{ \nu\in \me \,\big|\,\spn \cap S_{\mu}^{1} = \emptyset\right\}.
\end{equation*}
Applying this observation twice, we easily see that
\begin{equation}\label{emptycap}
\left(\{\mu\}^{\mathbbm{u}}\right)^{\mathbbm{u}} = \bigcap_{\nu\in \{\mu\}^{\mathbbm{u}}} \{\nu\}^{\mathbbm{u}} = \bigcap_{\nu\in\me, S_{\nu}\cap S_\mu^1 = \emptyset} \left\{ \vartheta\in\me \,\big|\, S_{\nu}\cap S_{\vartheta}^1 = \emptyset \right\},
\end{equation}
and therefore we obtain the following equivalence:
\begin{equation}\label{suppjell}
\vartheta\in\left(\{\mu\}^{\mathbbm{u}}\right)^{\mathbbm{u}}
\qquad\Longleftrightarrow\qquad
\spvt^1 \subseteq S_{\mu}^1.
\end{equation}
(Note that if $\{\mu\}^{\mathbbm{u}} = \emptyset$, then $\bigcap_{\nu\in \emptyset} \{\nu\}^{\mathbbm{u}} = \me$ in \eqref{emptycap} by definition.)

Now, since $\mu\in \left(\{\mu\}^{\mathbbm{u}}\right)^{\mathbbm{u}}$ always holds, the equivalence of (i) and (iii) is apparent.
We continue with proving (i)$\Rightarrow$(ii). 
Observe that \eqref{suppjell} implies $$\left\{\delta_z \,\big|\, z\in S_\mu \right\} \subseteq \left(\{\mu\}^{\mathbbm{u}}\right)^{\mathbbm{u}},$$ thus (ii/a) follows.
On the other hand, if any $y\in X$ satisfies $$S_{\delta_y}^1 = B_1(y) \subseteq B_1(x) = S_{\delta_x}^1,$$ then again by \eqref{suppjell} we infer $x = y$.

Finally, we show (ii)$\Rightarrow$(i). Assume that $\vartheta\in\left(\{\mu\}^{\mathbbm{u}}\right)^{\mathbbm{u}}$.
By (ii/b) we get that $\spvt^1 \subseteq S_{\delta_x}^1$ holds if and only if $\spvt \subseteq \{x\}$, which implies (i).
\end{proof}

\begin{remark}
Note that if the diameter of the metric space $X$ is less than 1, i.e. there exists an $0<r<1$ such that $d(x,y) \leq r$ $(\forall\;x,y\in X)$, then $\pi(\mu,\nu) \leq r$ holds for every $\mu,\nu\in\me$.
In particular, $\{\mu\}^{\mathbbm{u}} = \emptyset$ and thus $\left(\{\mu\}^{\mathbbm{u}}\right)^{\mathbbm{u}} = \me$ for every $\mu\in\me$.
\end{remark}

The following lemma describes the action of $\varphi$ on Dirac measures.

\begin{lemma}\label{implementedisometry}
Let $(X,\|\cdot\|)$ be a separable real Banach space, and let $\varphi\colon\me\to \me$ be a surjective $\pi$ isometry. Then there exists a surjective affine isometry $\psi\colon X\to X$ such that
\begin{equation}\label{psi}
\varphi(\delta_x) = \delta_{\psi(x)} \qquad (\forall\;x\in X).
\end{equation}
\end{lemma}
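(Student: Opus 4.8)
The plan is to combine the metric characterisation of Dirac measures from Proposition \ref{L:metric char} with the Mazur--Ulam theorem. The two propositions already proved give us a purely metric description of the operations ``$\mathscr{A} \mapsto \mathscr{A}\eee$'' and hence of the property ``$(\{\mu\}\eee)\eee = \{\mu\}$''. Since a surjective $\pi$-isometry $\varphi$ preserves all distances, it must preserve this metrically-defined property. The first task, therefore, is to show that $\varphi$ maps $\dx$ \emph{onto} $\dx$.

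\textbf{Identifying the Dirac measures.} I would first argue that for a separable Banach space $X$ the condition (ii/b) of Proposition \ref{L:metric char} holds for \emph{every} point $x\in X$. Indeed, in a normed space $B_1(y)\subseteq B_1(x)$ forces $y=x$: the open unit balls are genuine translates $B_1(x)=x+B_1(0)$, and one ball is contained in another only if the centres coincide (otherwise moving slightly in the direction $x-y$ produces a point of $B_1(y)\setminus B_1(x)$, using that $\dim X\ge 1$). Consequently condition (ii) of Proposition \ref{L:metric char} reduces precisely to ``$\mu$ is a Dirac measure'', so the equivalence (i)$\Leftrightarrow$(ii) yields
\begin{equation*}
\mu\in\dx \quad\Longleftrightarrow\quad \left(\{\mu\}\eee\right)\eee = \{\mu\}.
\end{equation*}
Because the right-hand condition is expressed entirely through the metric $\pi$ (the operation $\mathscr{A}\mapsto\mathscr{A}\eee$ depends only on which pairs are at distance one), and because $\varphi$ is a surjective isometry, $\varphi$ preserves it in both directions. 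Hence $\varphi$ restricts to a bijection $\dx\to\dx$. This induces a map $\psi\colon X\to X$ via $\varphi(\delta_x)=\delta_{\psi(x)}$, which is a bijection of $X$ since $x\mapsto\delta_x$ is a bijection of $X$ onto $\dx$.

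\textbf{From $\pi$ on Diracs to the norm on $X$.} Next I would compute $\pi(\delta_x,\delta_y)$ explicitly and relate it to $\|x-y\|$. By Proposition \ref{1-tav}, $\pi(\delta_x,\delta_y)=1$ exactly when $\|x-y\|\ge 1$; more generally a direct computation from \eqref{LPdef} gives $\pi(\delta_x,\delta_y)=\min\{\|x-y\|,1\}$. Since $\varphi$ is a $\pi$-isometry, $\psi$ satisfies
\begin{equation*}
\min\{\|\psi(x)-\psi(y)\|,1\} = \min\{\|x-y\|,1\} \qquad(\forall\;x,y\in X).
\end{equation*}
This says $\psi$ preserves the norm-distance whenever it is at most $1$; to upgrade this to a genuine global isometry $\|\psi(x)-\psi(y)\|=\|x-y\|$ for all $x,y$, I would use a chaining/subdivision argument. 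Given arbitrary $x,y$, subdivide the segment (or rather the pair) into finitely many points $x=z_0,z_1,\dots,z_N=y$ with $\|z_{i-1}-z_i\|<1$; since $\psi$ is a bijection preserving small distances it preserves distances of each consecutive pair, and a telescoping estimate together with surjectivity forces $\|\psi(x)-\psi(y)\|=\|x-y\|$. \textbf{This local-to-global upgrade is the main obstacle}, since one must verify that local isometry plus bijectivity genuinely propagates to a global isometry in a Banach space (a careful argument uses that $\psi$ cannot collapse or stretch any pair without violating local distance preservation at some intermediate scale).

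\textbf{Applying Mazur--Ulam.} Once $\psi\colon X\to X$ is a surjective isometry of the Banach space $X$, the Mazur--Ulam theorem applies and yields that $\psi$ is affine, i.e. $\psi=T+\psi(0)$ for a surjective linear isometry $T$. This gives the surjective affine isometry claimed in \eqref{psi}, completing the proof of the lemma.
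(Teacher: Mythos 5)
Your proposal is correct, and its first two steps coincide with the paper's own proof: you identify $\dx$ via the equivalence $\mu\in\dx \Leftrightarrow \left(\{\mu\}\eee\right)\eee=\{\mu\}$ (noting, as one must in a normed space, that condition (ii/b) of Proposition \ref{L:metric char} holds automatically for every $x$), you observe that a surjective $\pi$-isometry intertwines the operation $\mathscr{A}\mapsto\mathscr{A}\eee$, and you compute $\pi(\delta_x,\delta_y)=\min\{1,\|x-y\|\}$, arriving at exactly the paper's relation \eqref{lokizom}. Where you genuinely diverge is in the local-to-global step, which you flag as the main obstacle: the paper splits into the cases $\dim X=1$ and $\dim X\geq 2$, rescales the norm by $\tfrac{1}{\alpha}$, and invokes the Rassias--\v Semrl theorem on unit-distance preserving maps (remarking that Mankiewicz's extension theorem would also do), whereas you propose an elementary chaining argument. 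Your chaining does work, and more cleanly than your hedging parenthetical suggests, with no dimension case split: subdividing the segment $[x,y]$ into $N$ pieces of length $\|x-y\|/N<1$ and telescoping gives the one-sided bound $\|\psi(x)-\psi(y)\|\leq\|x-y\|$, and the reverse inequality follows not from ``surjectivity forces'' in any vague sense but from the precise observation that the condition $\min\{\|\psi(x)-\psi(y)\|,1\}=\min\{\|x-y\|,1\}$ is symmetric under inversion, so the same telescoping applied to the bijection $\psi^{-1}$ at the points $\psi(x),\psi(y)$ yields $\|x-y\|\leq\|\psi(x)-\psi(y)\|$. This route uses only that a Banach space is metrically convex, and it avoids both the external theorem and the renorming trick; the paper's citation buys brevity on the page at the cost of a case distinction and an appeal to a nontrivial result. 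One small slip in your verification of (ii/b): to produce a point of $B_1(y)\setminus B_1(x)$ you should move from $y$ in the direction $y-x$ (away from $x$), not $x-y$; the point $y+(1-\epsilon)\frac{y-x}{\|y-x\|}$ lies in $B_1(y)$ but at distance $\|y-x\|+1-\epsilon>1$ from $x$ for small $\epsilon$. After the isometry of $X$ is established, your appeal to Mazur--Ulam matches the paper verbatim.
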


\begin{proof} 
Since $\varphi$ is a bijective isometry, we have 
\begin{equation*}
\varphi\big(\left(\{\mu\}^{\mathbbm{u}}\right)^{\mathbbm{u}}\big) = \big(\{\varphi(\mu)\}^{\mathbbm{u}}\big)^{\mathbbm{u}} \qquad (\forall\;\mu\in\me).
\end{equation*}
Thus an easy application of the previous proposition yields $\varphi(\Delta_X) = \Delta_X$.
This also means that there exists a bijective map $\psi\colon X\to X$ which induces the restriction $\varphi |_{\Delta_X}$, i.e.
\begin{equation}
\varphi(\delta_x):=\delta_{\psi(x)} \qquad (\forall\;x\in X).
\end{equation}

We will show that $\psi$ is an isometry. Observe that 
$$
\pi(\delta_{x_1},\delta_{x_2}) = \min\{1,\|x_1-x_2\|\} \qquad (\forall\; x_1,x_2 \in X).
$$ 
Therefore for all $\alpha\in(0,1)$ we have
\begin{equation}\label{lokizom}
\|\psi(x_1)-\psi(x_2)\|=\alpha \;\iff\; \|x_1-x_2\|=\alpha \qquad (\forall\; x_1,x_2\in X).
\end{equation}
If $X$ is one-dimensional, then it is rather easy to see that \eqref{lokizom} implies the isometriness of $\psi$. 
Now, assume that $\dim X \geq 2$. 
After suitable renorming (i.e. considering the norm $|||\cdot||| := \tfrac{1}{\alpha}\|\cdot\|$), from a result of T.M. Rassias and P. \v Semrl \cite[Theorem 1]{RaSe} we conclude that
\begin{align*}
\|\psi(x)-\psi(y)\|=n\alpha\;\iff\;\|x-y\|=n\alpha \qquad (\forall\;\alpha\in(0,1), n\in\mathbb{N}),
\end{align*}
and therefore $\psi$ is indeed an isometry. Finally, by the famous Mazur--Ulam theorem we obtain that $\psi$ is affine, which completes the proof.
\end{proof}

We remark that the last step of the proof (using the Rassias--\v Semrl theorem) can be also done by the extension theorem of Mankiewicz \cite{PM}.

In light of the above lemma, from now on we may and do assume without loss of generality that $\varphi$ acts identically on $\Delta_X$, i.e.,
\begin{align}\label{FIX}
\varphi(\delta_x)=\delta_x\qquad(\forall\; x\in X),
\end{align}
and our aim will be to show that $\varphi$ acts identically on the whole of $\me$.
After we do so, to obtain the result of our Main Theorem for general surjective $\pi$-isometries will be straightforward.
Namely, if \eqref{psi} is fulfilled, then we can consider the following modified transformation: 
\begin{equation}\label{eq:id}
\varphi_{\psi}\colon\me\to\me, \quad \left(\varphi_{\psi}(\mu)\right)(A) := \left(\varphi(\mu)\right)(\psi[A])\qquad(\forall\;\mu\in\me, A\in\mathscr{B}).
\end{equation}
By our assumption, $\varphi_{\psi}$ fixes every element of $\Delta_X$ and thus also of $\me$, which implies \eqref{MT IMPL}.

Next, let us define the following continuous function for each $\mu\in\me$: 
$$
W_{\mu}\colon X\to[0,1], \;\; W_{\mu}(x) := \pi(\delta_x,\mu)
$$
which will be called the \emph{witness function} of $\mu$.
The main advantage of the assumption \eqref{FIX} is that the witness function becomes $\varphi$-invariant, i.e.
\begin{equation}\label{invariancia}
W_{\mu}(x)=\pi(\delta_x,\mu)=\pi(\varphi(\delta_x),\varphi(\mu))=\pi(\delta_x,\varphi(\mu))=W_{\varphi(\mu)}(x)\quad (\forall\; x\in X).
\end{equation}
It is natural to expect that the shape of the witness function carries some information about the measure.
The last three major steps of the proof will be devoted to explore this for the $\varphi$-images of finitely supported measures in the setting of separable Banach spaces.
However, as demonstrated by the next example, the witness function usually does not distinguish measures in general complete and separable metric spaces.

\begin{example}
Consider the complete and separable metric space $(X,d)$ with
\begin{equation*}
X:=\{x_1,x_2,x_3\} \qquad\text{and}\qquad d(x_i,x_j) := 
\left\{
\begin{matrix}
\frac{1}{3} & \text{if } i \neq j\\
0 & \text{if } i=j
\end{matrix}
\right..
\end{equation*}
Let $\mu:=\frac{1}{2}\delta_{x_1}+\frac{1}{2}\delta_{x_2}$ and $\nu:=\frac{1}{2}\delta_{x_2}+\frac{1}{2}\delta_{x_3}$. 
An easy calculation shows that we have $\pi(\delta_x,\mu)=\frac{1}{3}=\pi(\delta_x,\nu)$ for all $x\in X$ and hence $W_{\mu}\equiv W_{\nu}$.
\end{example}

%%%%%%%%%%%%%%%%%%%%%%%%%%%

\subsection{Second major step: isolated atoms on the vertices of the convex hull of the support}

Here we will prove that if $\mu$ is a finitely supported measure, and $\xk$ is a vertex of the convex hull of $\spm$, then $\xk$ is an isolated atom of $\varphi(\mu)$ and
\begin{align*}
\mu(\{\xk\}) = (\varphi(\mu))(\{\xk\}).
\end{align*}

We begin with a technical statement, which will be very useful in the sequel.

\begin{proposition}\label{min}
	Let $X$ be a separable real Banach space, and suppose that $\mu$ is a finitely supported measure.
	Then for every $\nu\in\me$, $\nu\neq\mu$ we have
	\begin{align}
		\label{min formula}
		\pi(\mu,\nu) = \min\left\{\varepsilon > 0\,|\,\forall\; A\subseteq S_\mu\colon \mu(A)\leq \nu(\overline{A^{\varepsilon}})+\varepsilon\right\}.
	\end{align}
\end{proposition}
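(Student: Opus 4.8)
The plan is to compare the set of admissible thresholds appearing in \eqref{min formula} with the set defining $\pi(\mu,\nu)$ in \eqref{LPdef}, and to establish three things: that restricting the test sets to subsets of $\spm$ changes nothing, that replacing the open neighbourhood $A^{\varepsilon}$ by its closure $\overline{A^{\varepsilon}}$ does not alter the infimum, and finally that the resulting infimum is actually attained, so that it is a genuine minimum. Throughout I would use that $\spm=\{x_1,\dots,x_n\}$ is finite, so every $A\subseteq\spm$ is a finite (hence closed, Borel) set and there are only finitely many such $A$. For the first reduction I would set $J:=\{\varepsilon>0\mid\forall\,B\in\bx\colon\mu(B)\le\nu(B^{\varepsilon})+\varepsilon\}$, so that $\pi(\mu,\nu)=\inf J$, and $I:=\{\varepsilon>0\mid\forall\,A\subseteq\spm\colon\mu(A)\le\nu(A^{\varepsilon})+\varepsilon\}$. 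The inclusion $J\subseteq I$ is trivial; conversely, given $\varepsilon\in I$ and any $B\in\bx$, the set $A:=B\cap\spm$ satisfies $\mu(B)=\mu(A)$ (because $\mu(\spm)=1$) and $A^{\varepsilon}\subseteq B^{\varepsilon}$, whence $\mu(B)=\mu(A)\le\nu(A^{\varepsilon})+\varepsilon\le\nu(B^{\varepsilon})+\varepsilon$. Thus $I=J$ and $\inf I=\pi(\mu,\nu)$.

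Next I would bring in the closed neighbourhoods, and this is where the Banach space structure enters. Since $A$ is finite, $A^{\varepsilon}=\bigcup_{a\in A}B_{\varepsilon}(a)$ is a finite union of open balls, and in a \emph{normed} space the closure of an open ball is the corresponding closed ball; hence closure commutes with this finite union and $\overline{A^{\varepsilon}}=\{z\in X\mid\min_{a\in A}\|z-a\|\le\varepsilon\}$. Setting $I':=\{\varepsilon>0\mid\forall\,A\subseteq\spm\colon\mu(A)\le\nu(\overline{A^{\varepsilon}})+\varepsilon\}$, the inclusion $A^{\varepsilon}\subseteq\overline{A^{\varepsilon}}$ gives $I\subseteq I'$, so $\inf I'\le\inf I$. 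For the reverse inequality I would use that $\overline{A^{\varepsilon}}\subseteq A^{\varepsilon'}$ whenever $\varepsilon'>\varepsilon$ (a point at distance $\varepsilon$ from $A$ lies in the open $\varepsilon'$-neighbourhood), so that any $\varepsilon\in I'$ forces $\varepsilon'\in I$ for all $\varepsilon'>\varepsilon$; letting $\varepsilon'\downarrow\varepsilon$ yields $\inf I\le\inf I'$. Therefore $\inf I'=\inf I=\pi(\mu,\nu)=:\varepsilon_0$, and since $\nu\neq\mu$ and $\pi$ is a metric we have $\varepsilon_0>0$.

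It then remains to show $\varepsilon_0\in I'$, i.e. that the minimum in \eqref{min formula} is attained. Observing that $I'$ is upward closed (if $\varepsilon\in I'$ and $\delta>\varepsilon$ then $\overline{A^{\varepsilon}}\subseteq\overline{A^{\delta}}$ gives $\delta\in I'$), I can pick a sequence $\varepsilon_n\downarrow\varepsilon_0$ with $\varepsilon_n\in I'$. Fixing any nonempty $A\subseteq\spm$ (the case $A=\emptyset$ being trivial), the closed sets $\overline{A^{\varepsilon_n}}=\{z\in X\mid\min_{a\in A}\|z-a\|\le\varepsilon_n\}$ decrease to $\{z\in X\mid\min_{a\in A}\|z-a\|\le\varepsilon_0\}=\overline{A^{\varepsilon_0}}$, so continuity from above of the finite measure $\nu$ yields $\nu(\overline{A^{\varepsilon_n}})\to\nu(\overline{A^{\varepsilon_0}})$. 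Passing to the limit in $\mu(A)\le\nu(\overline{A^{\varepsilon_n}})+\varepsilon_n$ gives $\mu(A)\le\nu(\overline{A^{\varepsilon_0}})+\varepsilon_0$ for every $A\subseteq\spm$, that is, $\varepsilon_0\in I'$. Hence the infimum is a minimum and equals $\pi(\mu,\nu)$, which is \eqref{min formula}.

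The main obstacle — and the reason the closure appears in \eqref{min formula} — is precisely the attainment step: with the open neighbourhoods $A^{\varepsilon}$ the infimum defining $\pi$ need not be attained, and the combination of $\overline{A^{\varepsilon}}$, the normed-space identity $\overline{A^{\varepsilon}}=\{z\mid\min_{a\in A}\|z-a\|\le\varepsilon\}$, and continuity from above is what repairs this. I would be most careful to flag that this identity genuinely uses the Banach space hypothesis (in a general metric space the closure of an open ball can be strictly smaller than the closed ball, so the proposition may fail there) and that it relies on $A$ being finite, so that closure commutes with the union of balls.
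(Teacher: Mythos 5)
Your proof is correct and takes essentially the same route as the paper's: both reduce the test sets to subsets of the finite support $S_\mu$ and then obtain attainment by continuity from above of $\nu$ along decreasing closed neighbourhoods, using the normed-space fact that $\overline{A^{\varepsilon}}$ coincides with the set of points at distance at most $\varepsilon$ from the finite set $A$. The only difference is organizational: you keep the quantifier over all $A\subseteq S_\mu$ and pass to the limit along a single sequence $\varepsilon_n\downarrow\pi(\mu,\nu)$ in the upward-closed set of admissible thresholds, whereas the paper first interchanges the infimum with the maximum over the finitely many subsets $A$ and proves attainment set by set via the limit $h\to 0+$.
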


\begin{proof}
	First, we observe that in \eqref{LPdef} it is enough to consider Borel sets satisfying $A\subseteq S_{\mu}$. 
	Furthermore, it is obvious that
	$$
	\pi(\mu,\nu)=\max\left\{\inf\{\varepsilon > 0\,|\, \mu(A)\leq\nu(A^{\varepsilon})+\varepsilon\}\,\big|\,A\subseteq S_{\mu}\right\}.
	$$
	Therefore it is enough to show that for each subset $A := \{a_1,\dots,a_k\}\subseteq S_{\mu}$ if the infimum
	\begin{align*}
		\varepsilon_A & := \inf\left\{\varepsilon>0\,|\,\mu(A)\leq\nu(A^{\varepsilon})+\varepsilon\right\} \\
		& = \inf\left\{\varepsilon>0\,\Bigg|\,\mu(\{a_1,\dots,a_k\}) \leq \nu\Bigg(\bigcup_{j=1}^k B_{\varepsilon}(a_j)\Bigg)+\varepsilon\right\}
	\end{align*}
	is positive, then it is actually a minimum if we take the closure of $A^{\varepsilon}$ instead of $A^{\varepsilon}$.
	The following line of inequalities holds for all $0<h<\varepsilon_A$ by definition:
	$$
	\nu\Bigg(\bigcup_{j=1}^k B_{\varepsilon_{A}-h}(a_j)\Bigg)+\varepsilon_{A}-h<\mu\left(\{a_1,\dots,a_k\}\right) \leq \nu\Bigg(\bigcup_{j=1}^k B_{\varepsilon_{A}+h}(a_j)\Bigg)+\varepsilon_{A}+h.
	$$
	Now, taking the limit of the right-hand side as $h\to 0+$, and using that $0 < r < s$ implies $\overline{B_r(x)}\subseteq B_s(x)$, we obtain
	$$
	\nu\Bigg(\bigcup_{j=1}^k \overline{B_{\varepsilon_{A}-\delta}(a_j)}\Bigg)+\varepsilon_{A}-\delta<\mu\left(\{a_1,\dots,a_k\}\right) \leq \nu\Bigg(\bigcup_{j=1}^k \overline{B_{\varepsilon_A}(a_j)}\Bigg)+\varepsilon_A
	$$
	for every $0 < \delta < \varepsilon_A$, which proves \eqref{min formula}.
\end{proof}

Note that the reason why we excluded the case when $\mu = \nu$ in \eqref{min formula} is that then for every $A\subseteq S_\mu$ and $\varepsilon>0$ we have $\mu(A)\leq \nu(\overline{A^{\varepsilon}})+\varepsilon$, and for every $\emptyset\neq A\subseteq S_\mu$ we have $\mu(A) > 0 = \nu(\emptyset)+0 = \nu(\overline{A^{0}})+0$.
Of course if we had defined $\overline{A^{0}}$ to be $A$, then \eqref{min formula} with $\varepsilon\geq 0$ instead of $\varepsilon>0$ would hold for the case $\mu = \nu$ as well.
However, we prefer not to change usual notations.

The following proposition plays a key role in the proof. 
But before stating it we introduce some notations.
The convex hull of two points $x$ and $y$ will be denoted by $[x,y]$, and the symbol $]x,y[$ will stand for the set $[x,y]\setminus\{x,y\}$. If $f$ is a real valued function on $X$ and $c\in\mathbb{R}$, then the sets 
$\{x\in X\,|\,f(x)<c\}$, $\{x\in X\,|\,f(x)=c\}$, and $\{x\in X\,|\,f(x)\leq c\}$
will be denoted by $\{f<c\}$, $\{f=c\}$, and $\{f\leq c\}$, respectively.

\begin{proposition}\label{burok} 
Let $(X,\|\cdot\|)$ be a separable real Banach space, $\mu\in\Fx\setminus\Delta_X$ and $K$ be the convex hull of $\spm$.
Assume that $\xk$ is a vertex of $K$ (which is a polytope) and set $\lk := \mu(\{\xk\})$ (for which we obviously have $0<\lk<1$). 
Then for every $\vartheta\in\me$ with $S_{\vartheta}\subseteq K$ the following two conditions are equivalent:
\begin{itemize}
\item[(i)] $\vartheta=\lk\delta_{\xk}+(1-\lk)\widetilde{\vartheta}$ where $\widetilde{\vartheta}\in\me$ with $S_{\widetilde{\vartheta}}\subseteq K\setminus B_r(\xk)$ for some $r>0$,
\item[(ii)] there exist a number $0 < \rho \leq 1-\lk$ and a half-line $\mathfrak{e}$ starting from $\xk$ such that the restriction $W_{\vartheta}|_{\mathfrak{e}}$ is of the following form:
\begin{align}\label{eq:tanufv}
	W_{\vartheta}|_{\mathfrak{e}}(x)=\left\{
        \begin{array}{ll}
            1 &\quad\mbox{if} \quad \|x-\xk\|\geq 1,\\
            \|x-\xk\| &\quad\mbox{if} \quad 1-\lk<\|x-\xk\|<1,\\
            1-\lk &\quad\mbox{if}\quad 1-\lk-\rho\leq\|x-\xk\|\leq 1-\lk.
        \end{array}    
    \right.
    \end{align}
    \end{itemize}
Moreover, $S_{\varphi(\mu)} \subseteq K$ and $\xk$ is an isolated atom of $\varphi(\mu)$ with $(\varphi(\mu))(\{\xk\}) = \lk$.
\end{proposition}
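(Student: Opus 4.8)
The plan is to treat the proposition in two layers: first the purely measure–theoretic equivalence (i)$\Leftrightarrow$(ii), which involves neither $\varphi$ nor the normalisation \eqref{FIX}, and then the final assertion, which I would deduce by running that equivalence on both $\mu$ and $\varphi(\mu)$ and transporting (ii) from one to the other via the $\varphi$-invariance \eqref{invariancia} of witness functions. The computational engine throughout is the reformulation of $W_\vartheta$ coming from Proposition \ref{min} applied to the finitely supported measure $\delta_x$: for $\vartheta\neq\delta_x$,
\begin{equation*}
W_\vartheta(x)=\pi(\delta_x,\vartheta)=\min\left\{\varepsilon>0\,\big|\,\vartheta(\overline{B_\varepsilon(x)})\geq 1-\varepsilon\right\},
\end{equation*}
so that reading off $W_\vartheta$ along a ray amounts to watching the nondecreasing map $\varepsilon\mapsto\vartheta(\overline{B_\varepsilon(x)})$ meet the decreasing line $1-\varepsilon$.

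For (i)$\Rightarrow$(ii): since $\xk$ is a vertex of the finite-dimensional polytope $K$ it is an exposed point, so a Hahn--Banach functional $f$ exposing $\xk$ yields a unit direction $u$ for which the ray $\mathfrak{e}=\{x_t:=\xk+tu\,|\,t\geq 0\}$ leaves $K$ and the support $S_{\tteta}$ of the second component stays far, namely $\mathrm{d}(x_t,S_{\tteta})>t$ for $t<1$ and $\geq 1$ once $t\geq 1$ (using compactness of $K\setminus B_r(\xk)$ and the uniform gap $f(\xk)-\sup_{S_{\tteta}}f>0$). Substituting $\vartheta(\overline{B_\varepsilon(x_t)})=\lk\,\mathbbm{1}[\varepsilon\geq t]+(1-\lk)\tteta(\overline{B_\varepsilon(x_t)})$ into the min-formula then gives the three regimes directly: for $t\geq 1$ every $\varepsilon<1$ misses all mass, so $W_\vartheta(x_t)=1$; for $1-\lk<t<1$ the atom is first reached at $\varepsilon=t$, where the captured mass is exactly $\lk\geq 1-t$, so $W_\vartheta(x_t)=t$; and for $t\leq 1-\lk$ with $\mathrm{d}(x_t,S_{\tteta})\geq 1-\lk$ the atom alone forces the threshold $\varepsilon=1-\lk$. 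Continuity of $t\mapsto\mathrm{d}(x_t,S_{\tteta})$ together with $\mathrm{d}(x_{1-\lk},S_{\tteta})>1-\lk$ makes the last regime hold on some $[1-\lk-\rho,1-\lk]$, giving an admissible $\rho\in(0,1-\lk]$.

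The converse (ii)$\Rightarrow$(i) is where I expect the genuine difficulty, because now the ray $\mathfrak{e}$ is merely given and I must reconstruct the mass of $\vartheta$ from the shape of $W_\vartheta|_{\mathfrak{e}}$ together with $\spvt\subseteq K$. The idea is to exploit the strict inequalities concealed in the min-formula at the transition $t=1-\lk$: for $t$ just below $1-\lk$ regime 3 forces $\vartheta(\overline{B_{1-\lk}(x_t)})\geq\lk$, while for $t$ just above it regime 2 (taking the admissible radius $\varepsilon=1-\lk<t$) forces $\vartheta(\overline{B_{1-\lk}(x_t)})<\lk$; and $\xk$ is precisely the point that lies in $\overline{B_{1-\lk}(x_t)}$ exactly when $t\leq 1-\lk$. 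Combining this upward jump with the regime-2 identities $W_\vartheta(x_t)=\|x_t-\xk\|$ for all $t\in(1-\lk,1)$ (which say the spheres through $\xk$ account for essentially all the freshly captured mass), and using that $\spvt\subseteq K$ with $\xk$ a vertex, should pin mass $\lk$ onto $\xk$ and open a $\vartheta$-null punctured ball around it, i.e. (i). The main obstacle is to make ``essentially all'' quantitative in a possibly non-strictly-convex Banach space: concretely, one first has to extract from the witness shape that $\mathfrak{e}$ points outward and that $\xk$ is the support point governing $\mathrm{d}(x_t,\spvt)$ over the regime-2 range, and only then force the atom.

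Finally, for the last assertion, $\mu$ itself satisfies (i) with $\tteta$ the normalised restriction of $\mu$ to $\spm\setminus\{\xk\}$ and $r:=\min\{\|\xk-y\|\,|\,y\in\spm\setminus\{\xk\}\}>0$; hence by (i)$\Rightarrow$(ii) there is a ray on which $W_\mu$ has the stated shape, and by \eqref{invariancia} so does $W_{\varphi(\mu)}$, i.e. $\varphi(\mu)$ satisfies (ii). To apply (ii)$\Rightarrow$(i) to $\varphi(\mu)$ I need $S_{\varphi(\mu)}\subseteq K$, which comes cleanly: Proposition \ref{1-tav} gives $\{W_\nu<1\}=S_\nu^1$ for every $\nu$, so \eqref{invariancia} yields $S_{\varphi(\mu)}^1=\spm^1=\bigcup_{p\in\spm}B_1(p)$; then for $z\in S_{\varphi(\mu)}$ one has $B_1(z)\subseteq\bigcup_{p\in\spm}B_1(p)$, and a Hahn--Banach separation (if $z\notin K$, a separating functional produces a point of $B_1(z)$ lying outside every $B_1(p)$) forces $z\in K$. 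With $S_{\varphi(\mu)}\subseteq K$ established, (ii)$\Rightarrow$(i) applies to $\varphi(\mu)$ and returns $\varphi(\mu)=\lk\delta_{\xk}+(1-\lk)\widetilde{\varphi(\mu)}$ with the second part supported away from $\xk$, which is exactly the statement that $\xk$ is an isolated atom of $\varphi(\mu)$ of mass $\lk$.
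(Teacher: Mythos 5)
Your overall architecture tracks the paper closely: the min-formula engine from Proposition \ref{min}, an exposing functional producing the ray, the transport of the witness shape via \eqref{invariancia}, and your separation argument for $S_{\varphi(\mu)}\subseteq K$ (the paper gets the same via $\pi(\delta_x,\mu)=1$ for $x\notin K^1$ and $S_{\varphi(\mu)}\subseteq X\setminus(X\setminus K^1)^1\subseteq K$) are all in substance the paper's steps. The genuine gap is the converse (ii)$\Rightarrow$(i), which you explicitly leave open --- and it is exactly the half of the equivalence that the final assertion applies to $\vartheta=\varphi(\mu)$, so it cannot be deferred. Your jump argument at $t=1-\lk$ does not localise mass at the singleton $\{\xk\}$: the inequalities $\vartheta(\overline{B_{1-\lk}(x_t)})\geq\lk$ (for $t\leq 1-\lk$) and $\vartheta(\overline{B_{1-\lk}(x_t)})<\lk$ (for $t>1-\lk$) are correct, but the region by which the two closed balls differ is large and can meet $K$ far from $\xk$, so the mass drop may occur anywhere in $K$ rather than at the atom. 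Moreover, your ray is built from the gap $f(\xk)-\sup_{S_{\tteta}}f>0$, which depends on $\vartheta$ and controls only $\underline{\mathrm{d}}(\{x_t\},S_{\tteta})$; since $\sup_{K\setminus\{\xk\}}f=\ck$, this gives no control of $\underline{\mathrm{d}}(\{x_t\},K\setminus\{\xk\})$, which is what the converse needs when all you know is $\spvt\subseteq K$. Worse, because your ray for $\mu$ is adapted to $\mu$, transporting (ii) to $\varphi(\mu)$ along that same ray forces you to prove the converse for a ray not adapted to $\varphi(\mu)$ --- precisely the arbitrary-ray version you admit is the obstacle.

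The paper removes this difficulty structurally rather than quantitatively: the half-line $\mathfrak{e}$ is constructed once, depending only on $K$ and $\xk$ (not on $\vartheta$), so as to satisfy the uniform nearest-point property \eqref{eq:xK tav}, namely $\underline{\mathrm{d}}(\{x\},K)=\|x-\xk\|<\|x-k\|$ for every $x\in\mathfrak{e}$ and every $k\in K\setminus\{\xk\}$. This is achieved in the finite-dimensional subspace $Y$ spanned by $K$ by choosing $\xv$ with $\xk\in\overline{B_1(\xv)}$ and $\overline{B_1(\xv)}\cap\{f\leq\ck\}\cap Y\subseteq\{f=\ck\}$ (equivalently one could take a norm-preserving Hahn--Banach extension of $f|_Y$ together with a norming direction $u\in Y$); this is strictly stronger than your gap estimate against $S_{\tteta}$, and it is exactly what a mere exposing functional plus compactness does not deliver in a non-strictly-convex norm. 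With both implications read along this single fixed ray, the converse collapses to a few lines: since $\overline{B_{1-\lk}(x_1)}\cap K=\{\xk\}$ for the point $x_1\in\mathfrak{e}$ with $\|x_1-\xk\|=1-\lk$, the plateau value $W_\vartheta(x_1)=1-\lk$ and Proposition \ref{min} give $\vartheta(\{\xk\})\geq\lk$ directly; the other plateau endpoint $x_2$ gives $\vartheta(\{\xk\})\leq\lk$; and $\overline{B_{1-\lk-\rho/2}(x_2)}\supseteq B_{\rho/2}(\xk)$ yields the isolation of the atom. Your outline is recoverable, but only after replacing your $\vartheta$-dependent ray by one satisfying \eqref{eq:xK tav} and reading condition (ii) relative to that ray; as it stands, the step actually used on $\varphi(\mu)$ is missing.
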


\begin{figure}[H]
	\begin{tikzpicture}[scale=1]
	%\draw[help lines] (-3,-3) grid (3,3);
	\node at (-3,-2) {$Y$};
	%unit circle
	\draw [fill] (0,0) circle [radius=1pt];
	\node [above] at (0,0) {$\check{x}$};
	\draw [thin] (0.1987,-0.8013) -- (0.8013,-0.1987);
	\draw [thin] plot [smooth] coordinates {(0.8,-0.2) (1,0.3) (0.8,0.8) (0.3,1) (-0.2,0.8)};
	\draw [thin] (-0.1987,0.8013) -- (-0.8013,0.1987);
	\draw [thin] plot [smooth] coordinates {(-0.8,0.2) (-1,-0.3) (-0.8,-0.8) (-0.3,-1) (0.2,-0.8)};
	\draw [ultra thin] (0,0) -- (-1,-0.3);
	\node at (-0.6,-0.03) {1};
	%half-line
	\draw (0.5,-0.5) -- (-2,2);
	\node [right] at (-1.5,1.5) {$\mathfrak{e}$};
	%K
	\draw[fill, lightgray] (0.5,-0.5) -- (0,-2) -- (1,-3) -- (3,-3) -- (4,-1) -- (3,0) -- (0.5,-0.5);
	\node at (1.2,-2.25) {$K$};
	%S_\vartheta
	\draw[fill, gray] (2,-2) circle [radius=10pt];
	\draw[fill, gray] (2,-2) -- (2.5,-1) -- (3.5,-1.5);
	\draw [fill, ultra thick, gray] plot [smooth] coordinates {(3,0) (2,-1) (4,-1)};
	\draw [fill, ultra thick, gray] plot [smooth] coordinates {(0,-2) (2,-1) (1,-1) (0.5,-2.5)};
	%S_\mu
	\draw[fill] (0,-2) circle [radius=1pt];	
	\draw[fill] (1,-3) circle [radius=1pt];	
	\draw[fill] (3,-3) circle [radius=1pt];	
	\draw[fill] (4,-1) circle [radius=1pt];	
	\draw[fill] (3,0) circle [radius=1pt];
	\draw[fill] (1.5,-1) circle [radius=1pt];
	\draw[fill] (3,-2) circle [radius=1pt];
	\draw[fill] (3,-0.5) circle [radius=1pt];
	\draw[fill] (2.5,-3) circle [radius=1pt];
	\node [white] at (2.6,-1.4) {$S_{\widetilde\vartheta}$};
	%point \xk = (0,0)
	\draw[fill] (0.5,-0.5) circle [radius=1pt];
	\node [above] at (0.5,-0.5) {$\xk$};
	%functionals
	\draw [dashed] (3,2) -- (-2,-3);
	\node [above] at (2.7,2) {$\{f = \hat{c}\}$};
	\draw [dashed] (3.25,1.75) -- (-1.75,-3.25);
	\node [right] at (3.2,1.9) {$\{f = c\}$};
	\end{tikzpicture}
	\caption{Illustration for Proposition \ref{burok} on the finite dimensional subspace $Y$. The support $S_\mu$ consists of the set of black points in $K$.}
\end{figure}
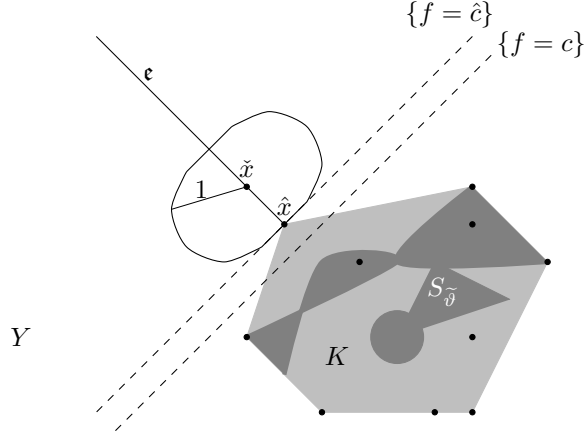

\begin{proof} 
	First, we construct a half-line $\mathfrak{e}$ which starts from $\xk$ and satisfies 
	\begin{align}\label{eq:xK tav}
	\underline{\mathrm{d}}\left(\{x\},K\right) = \|x-\xk\| < \|x-k\| \qquad (\forall\; x\in\mathfrak{e}, k\in K\setminus{\{\xk\}}).
	\end{align}
	Being the convex hull of a finite set, each vertex of $K$ is strongly exposed, i.e. there exists a continuous linear functional $f\in X^{\ast}$ with 
	$$
	\ck := \max\{f(y)\,|\, y\in K\} = f(\xk) \quad \text{and} \quad K\setminus \{\xk\} \subset \{f<\ck\}.
	$$
	Let $Y$ be the subspace generated by $K$.	
	We fix an $\xv\in Y$ such that $\xk\in\overline{B_1(\xv)}$ and
	$$
	\overline{B_1(\xv)} \cap \{f\leq\ck\} \cap Y \subseteq \{f=\ck\}.
	$$
	Note that as $Y$ is finite dimensional, the existence of such an $\xv$ is guaranteed.
	Now, we define $\mathfrak{e}$ to be the half-line starting from $\xk$ and going through $\xv$.
	It is straightforward that $\mathfrak{e}$ fulfils \eqref{eq:xK tav}.
	
	Next, we consider an arbitrary $\vartheta\in\me$ which satisfies (i). It is clear form the compactness of $S_\mu$ and $S_\vartheta$, and the isolatedness of the point $\xk$ in both $S_\mu$ and $S_\vartheta$, that there is a number $c<\ck$ such that
	\begin{align}\label{eq:jo c}
	(\spm\cup S_{\vartheta}) \setminus \{\xk\}\subseteq\{f<c\}
	\end{align}
	holds.
	Consequently, for every $x\in\mathfrak{e}$ we have 
	$$
	\underline{\mathrm{d}}\left(\{x\},K\right) = \|x-\xk\| > \underline{\mathrm{d}}\left(\{x\}, \{f\leq c\}\cap Y\right) > \underline{\mathrm{d}}\left(\{x\}, (\spm\cup S_{\vartheta}) \setminus \{\xk\}\right).
	$$
	Therefore, if $x\in\mathfrak{e}$ and $\alpha := \|x-\xk\| > 0$, then there exists a $y\in\;]x,\xk[\;\subseteq\mathfrak{e}$ which satisfies the following equations:
	\begin{align}\label{eq:eltolas2}
	\lk = \vartheta(\{\xk\}) = \vartheta(\overline{B_{\alpha}(x)}) = \vartheta(\overline{B_{\alpha}(z)}) \qquad (\forall\; z\in[x,y]).
	\end{align}
	and
	\begin{align}\label{eq:eltolas}
	\lk = \mu(\{\xk\}) = \mu(\overline{B_{\alpha}(x)}) = \mu(\overline{B_{\alpha}(z)}) \qquad (\forall\; z\in[x,y]).
	\end{align}
	In fact, $y$ can be chosen to be any point on $]x,\xk[$ such that 
	$$
	\|x-y\| \leq \underline{\mathrm{d}}\left(\{f=\ck\},\{f=c\}\right).
	$$
		
	We proceed to prove the equivalence of (i) and (ii). 
	Trivially, both (i) and (ii) implies that $\vartheta \notin \Delta_X$, and therefore from now on we may and do assume that $\vartheta$ is not a Dirac measure.
	Recall that according to Proposition \ref{min} we have
	\begin{align}\label{eq:p(x,mu)}
	W_\vartheta(x) = \min\left\{\varepsilon > 0\,|\,1\leq\vartheta(\overline{B_{\varepsilon}(x)})+\varepsilon\right\} \qquad (\forall\;x\in X).
	\end{align}
	If (i) holds, then  by combining \eqref{eq:p(x,mu)} with \eqref{eq:xK tav} and \eqref{eq:eltolas2} we obtain that $W_{\vartheta}|_{\mathfrak{e}}$ is of the form \eqref{eq:tanufv} with $\rho := \min\left(1-\lk,\underline{\mathrm{d}}\left(\{f=\ck\},\{f=c\}\right)\right)$.
	
	Conversely, we suppose that $\vartheta\in\me$, $S_{\vartheta}\subseteq K$ and $\vartheta$ satisfies (ii). 
	Let $x_1$ and $x_2$ be the points on $\mathfrak{e}$ which satisfy $\|x_1-\xk\| = 1-\lk$ and $\|x_2-\xk\| = 1-\lk-\rho$.
	By \eqref{eq:tanufv} we have $W_\vartheta(x_1) = W_\vartheta(x_2) = 1-\lk$.
	Therefore on one hand, we obtain
	$$
	1\leq \vartheta(\overline{B_{1-\lk}(x_1)}) + 1-\lk = \vartheta(\{\xk\}) + 1-\lk,
	$$
	from which $\lk \leq \vartheta(\{\xk\})$ follows.
	On the other hand, 
	$$
	1 > \vartheta(\overline{B_{1-\lk-\delta}(x_2)}) + 1-\lk-\delta \geq \vartheta(\{\xk\}) + 1-\lk-\delta \quad (\forall\; 0<\delta< \rho)
	$$
	is satisfied. Hence we infer $\vartheta(\{\xk\}) < \lk+\delta$ for every $\delta > 0$, and thus trivially $\vartheta(\{\xk\}) = \lk$ holds.
	But we also observe the following:
	$$
	1 > \vartheta\left(\overline{B_{1-\lk-\delta}(x_2)}\right) + 1-\lk-\delta \geq \vartheta\left(\overline{B_{1-\lk-\tfrac{\rho}{2}}(x_2)}\right) + 1-\lk-\delta \quad \left(\forall\; 0<\delta<\tfrac{\rho}{2}\right)
	$$
	which implies 
	$$
	\lk \geq \vartheta\left(\overline{B_{1-\lk-\tfrac{\rho}{2}}(x_2)}\right) \geq \vartheta\left(\{\xk\}\right) = \lk,
	$$
	whence we conclude that $\xk$ is indeed an isolated atom of $\vartheta$.
	
	For the last statement first, by Proposition \ref{1-tav} we infer $\pi(\delta_x,\mu) = 1$ for every $x\notin K^1$. 
	The $\varphi$-invariance of the witness function gives 
	$$
	1=\pi(\delta_x,\varphi(\mu)) \qquad (x\in X\setminus K^1),
	$$ 
	and hence, again by Proposition \ref{1-tav}, we conclude
	$$
	S_{\varphi(\mu)}\cap B_1(x)=\emptyset \qquad (x\in X\setminus K^1).
	$$ 
	Consequently, we obtain
	$$
	S_{\varphi(\mu)}\subseteq X\setminus(X\setminus K^1)^1\subseteq K.
	$$
	Finally, an application of the equivalence of (i) and (ii) gives the rest.
\end{proof}

We have the following consequence.

\begin{corollary}\label{corollary ketpont} 
Let $(X,\|\cdot\|)$ be a separable real Banach space. 
If $\mu\in\Fx$ such that $\# S_\mu \leq 2$, then $\varphi(\mu)=\mu$. 
Moreover, if $\nu\in\me$ with $W_\mu \equiv W_\nu$, then $\mu$ and $\nu$ coincide.
\end{corollary}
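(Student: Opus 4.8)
The plan is to establish the second (``moreover'') assertion first, because the first one then drops out for free: by the $\varphi$-invariance \eqref{invariancia} of the witness function we have $W_{\varphi(\mu)}\equiv W_\mu$, so applying the second assertion to $\nu:=\varphi(\mu)\in\me$ immediately yields $\varphi(\mu)=\mu$. To prove the second assertion, let $\nu\in\me$ satisfy $W_\nu\equiv W_\mu$ and split according to $\#\spm$. If $\#\spm=1$, say $\mu=\delta_x$, then $W_\mu(x)=\pi(\delta_x,\delta_x)=0$, hence $\pi(\delta_x,\nu)=W_\nu(x)=0$, and since $\pi$ is a metric this forces $\nu=\delta_x=\mu$.

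The substantial case is $\#\spm=2$, where $\mu=\lk\delta_{x_1}+(1-\lk)\delta_{x_2}$ with $x_1\neq x_2$ and $0<\lk<1$; here $\mu\in\Fx\setminus\Delta_X$, the convex hull $K$ of $\spm$ is the segment $[x_1,x_2]$, and its vertices are exactly $x_1$ and $x_2$. First I would confine the support of $\nu$ to $K$. For any $x\notin K^1$ we have $\underline{\mathrm{d}}(\{x\},\spm)\geq 1$, so Proposition \ref{1-tav} gives $\pi(\delta_x,\mu)=1$; by $W_\nu\equiv W_\mu$ this yields $\pi(\delta_x,\nu)=1$, whence $S_\nu\cap B_1(x)=\emptyset$, again by Proposition \ref{1-tav}. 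Taking the union over all $x\in X\setminus K^1$ gives $S_\nu\subseteq X\setminus(X\setminus K^1)^1\subseteq K$, exactly as in the closing part of the proof of Proposition \ref{burok}.

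With $S_\nu\subseteq K$ secured, I would apply the equivalence (i)$\Leftrightarrow$(ii) of Proposition \ref{burok} at each vertex. Taking $\xk=x_1$ (so $\lk=\mu(\{x_1\})$), the measure $\mu$ itself satisfies (i) with $\widetilde\vartheta=\delta_{x_2}$ and $r=\|x_1-x_2\|$, so (i)$\Rightarrow$(ii) shows that $W_\mu$ has the shape \eqref{eq:tanufv} along the half-line $\mathfrak{e}$ constructed in that proof from $K$ and $x_1$ alone. Since $W_\nu\equiv W_\mu$ and $S_\nu\subseteq K$, the measure $\nu$ satisfies (ii) for the very same $\mathfrak{e}$, and (ii)$\Rightarrow$(i) then gives $\nu(\{x_1\})=\lk$ with $x_1$ an isolated atom of $\nu$. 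Running the identical argument at the vertex $x_2$ produces $\nu(\{x_2\})=1-\lk$. As $\nu$ is a probability measure and these two atoms already carry total mass $1$, we conclude $\nu=\lk\delta_{x_1}+(1-\lk)\delta_{x_2}=\mu$, which finishes the argument.

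I expect the only genuinely delicate point to be the reduction $S_\nu\subseteq K$: it is precisely what licenses the use of Proposition \ref{burok} for a \emph{general} measure $\nu$ (rather than for a $\varphi$-image, where the inclusion was already part of the statement), and it rests on the fact that $W_\mu\equiv 1$ off $K^1$. Everything downstream is a bookkeeping application of the vertex equivalence, needing only the mild observation that the half-line $\mathfrak{e}$ depends on $K$ and the chosen vertex but not on the measure, so that a single $\mathfrak{e}$ serves both $\mu$ and $\nu$.
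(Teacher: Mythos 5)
Your proposal is correct and is exactly the argument the paper intends: the corollary is stated as an immediate consequence of Proposition \ref{burok}, namely localising $S_\nu$ inside $K$ via Proposition \ref{1-tav} (as in the closing part of the proof of Proposition \ref{burok}) and then applying the equivalence (i)$\Leftrightarrow$(ii) at the two vertices $x_1,x_2$, with the first assertion following from the invariance \eqref{invariancia}. Your observation that the half-line $\mathfrak{e}$ depends only on $K$ and the vertex, so one $\mathfrak{e}$ serves both $\mu$ and $\nu$, is precisely the point that makes the vertex argument transfer from $\mu$ to $\nu$.
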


According to the above results, now we know that $\varphi$ fixes every measure which has at most two points in its support. 
Although we are expecting the same for all $\mu\in\Fx$, right now we only have some information about the behaviour of $\varphi(\mu)$ near to the vertices of the convex hull of its support.

%%%%%%%%%%%%%%%%%%%%%%%%%%%

\subsection{Third major step: the story beyond vertices}

Here we show a procedure how the behaviour of $\varphi(\mu)$ can be completely explored in case when $\mu$ is a finitely supported measure.
In order to do so, we need to introduce some technical notations.
Let $s>0$ be a positive parameter and define the \emph{$s$-L{\'e}vy--Prokhorov distance} $\pi_s \colon \me\times\me\to[0,1]$ by the following formula:
\begin{align}\label{pi_s defje}
\pi_s(\mu,\nu) := \inf\left\{\varepsilon>0\,|\,\forall\; A\in\bx\colon s\cdot\mu(A)\leq s\cdot\nu(A^{\varepsilon})+\varepsilon\right\} \;\; (\forall\;\mu,\nu\in\me).
\end{align}
Note that although we do not know at this point whether $\pi_s$ defines a metric on $\me$, we will see this later.
The \emph{$s$-witness function} (or modified witness function) of $\mu\in\me$ is defined by
\begin{align*}
W_{s,\mu}\colon X\to\mathbb{R}, \quad W_{s,\mu}(x):=\pi_s(\delta_x,\mu).
\end{align*}
Obviously, if we set $s=1$, then we get the original L{\'e}vy--Prokhorov metric and witness function.

In the next two lemmas we collect some properties of the $s$-L{\'e}vy--Prokhorov distance analogous to those provided in the previous major step.

\begin{lemma}\label{s-min}
Let $(X,\|\cdot\|)$ be a separable real Banach space and $s>0$. 
Then $(\me,\pi_s)$ is a metric space.
Furthermore, for every $\mu\in\Fx$ and $\nu\in\me$ with $\nu\neq\mu$ we have
$$
\pi_s(\mu,\nu) = \min\left\{\varepsilon > 0\,\big|\,\forall\; A\subseteq S_\mu \colon s\cdot\mu(A)\leq s\cdot\nu(\overline{A^{\varepsilon}})+\varepsilon\right\}.
$$
\end{lemma}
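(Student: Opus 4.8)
The plan is to reduce both assertions to the already-treated case $s=1$ by rescaling the underlying norm, the point being that $\pi_s$ is, up to a positive multiplicative constant, an ordinary L\'evy--Prokhorov metric --- but computed for a rescaled norm.

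First I would record the scaling identity. Write $\|\cdot\|_\ast := \tfrac1s\|\cdot\|$, and let $\pi_\ast$ and $A^{\eta}_\ast$ denote the L\'evy--Prokhorov metric and the $\eta$-enlargement taken with respect to $\|\cdot\|_\ast$. Since $\|z\|_\ast<\eta$ is the same as $\|z\|<s\eta$, one has $A^{\eta}_\ast = A^{s\eta}$ for every $\eta>0$. Substituting this into the definition of $\pi_\ast$ and changing variables via $\varepsilon=s\eta$, the defining inequality $s\mu(A)\le s\nu(A^{\varepsilon})+\varepsilon$ becomes exactly $\mu(A)\le\nu(A^{\eta}_\ast)+\eta$, which yields
\begin{equation*}
\pi_s(\mu,\nu)=s\cdot\pi_\ast(\mu,\nu)\qquad(\forall\;\mu,\nu\in\me).
\end{equation*}
This is the only computational point, and the sole care required is to keep the enlargement radius (which scales) separate from the additive mass term.

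With this identity the metric axioms are immediate. As $(X,\|\cdot\|_\ast)$ is again a separable real Banach space, its ordinary L\'evy--Prokhorov metric $\pi_\ast$ is a genuine metric on $\me$ (see \cite[p.~27]{Hu}), and multiplying a metric by the positive constant $s$ preserves every metric axiom. In particular, the nontrivial symmetry of the L\'evy--Prokhorov distance is inherited rather than reproved.

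For the minimum formula I would transport Proposition~\ref{min} through the same identity. Since $\mu\in\Fx$ and $\nu\neq\mu$, applying Proposition~\ref{min} in $(X,\|\cdot\|_\ast)$ gives
\begin{equation*}
\pi_\ast(\mu,\nu)=\min\left\{\eta>0\,\big|\,\forall\;A\subseteq\spm\colon \mu(A)\le\nu(\overline{A^{\eta}_\ast})+\eta\right\}.
\end{equation*}
Here $\spm$ is unchanged because $\|\cdot\|$ and $\|\cdot\|_\ast$ induce the same topology, whence closures also agree and $\overline{A^{\eta}_\ast}=\overline{A^{s\eta}}$. Multiplying by $s$ and setting $\varepsilon=s\eta$ converts the right-hand side into $\min\{\varepsilon>0\mid\forall\,A\subseteq\spm\colon s\mu(A)\le s\nu(\overline{A^{\varepsilon}})+\varepsilon\}$, which, combined with $\pi_s=s\,\pi_\ast$, is precisely the claimed equality. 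The main thing to watch is the bookkeeping in the scaling identity together with the observation that passing to $\|\cdot\|_\ast$ keeps us within the class of separable real Banach spaces, so that the earlier results genuinely apply; everything else is a direct appeal to the $s=1$ case. If one prefers to avoid rescaling, one may instead carry the weight $s$ through the proof of Proposition~\ref{min} line by line, but the reduction above is shorter and explains why no new idea is needed.
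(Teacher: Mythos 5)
Your proposal is correct and follows essentially the same route as the paper: the paper's proof likewise establishes the scaling identity $\pi_{s,\|\cdot\|}(\mu,\nu)=s\cdot\pi_{\frac{1}{s}\|\cdot\|}(\mu,\nu)$ via $A^{s\delta,\|\cdot\|}=A^{\delta,\frac{1}{s}\|\cdot\|}$, deduces the metric axioms from the $s=1$ case, and obtains the minimum formula by applying Proposition~\ref{min} in the rescaled space. Your additional remarks (invariance of $S_\mu$, closures, and the Borel $\sigma$-algebra under the equivalent norm) are exactly the bookkeeping the paper also records, so nothing is missing.
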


\begin{proof}
For the sake of clarity, let us use more detailed notations here. 
If $\|\cdot\|$ is a norm on $X$, then denote by $A^{\varepsilon,\|\cdot\|}$ and $\pi_{s,\|\cdot\|}$ the open $\varepsilon$-neighborhood of $A$ and the $s$-L{\'e}vy--Prokhorov metric with respect to $\|\cdot\|$, respectively.
Observe that the Borel $\sigma$-algebras of $(X,\|\cdot\|)$ and $\left(X,\frac{1}{s}\|\cdot\|\right)$ coincide as the norms are equivalent. 
By an elementary computation we have $A^{s\delta,\|\cdot\|} = A^{\delta,\frac{1}{s}\|\cdot\|}$, which yields

\begin{equation}\label{s-pi}
\begin{split}
\pi_{s,\|\cdot\|}(\mu,\nu)& = \inf\left\{\varepsilon>0\,\Big|\,\forall\; A\in\bx\colon s\cdot\mu(A)\leq s\cdot\nu\big(A^{\varepsilon,\|\cdot\|}\big)+\varepsilon\right\}\\
& = \inf\left\{s\delta>0\,\Big|\,\forall\; A\in\bx\colon s\cdot\mu(A)\leq s\cdot\nu\big(A^{s\delta,\|\cdot\|}\big)+s\delta\right\}\\
& = s\cdot \inf\left\{\delta>0\,\Big|\,\forall\; A\in\bx\colon \mu(A)\leq \nu\big(A^{s\delta,\|\cdot\|}\big)+\delta\right\}\\
& = s\cdot \inf\left\{\delta>0\,\Big|\,\forall\; A\in\bx\colon \mu(A)\leq \nu\big(A^{\delta,\frac{1}{s}\|\cdot\|}\big)+\delta\right\}\\
& = s\cdot \pi_{\frac{1}{s}\|\cdot\|}(\mu,\nu)
\end{split}
\end{equation}
for every $\mu,\nu\in\me$.
In particular, $\pi_s$ is a metric on $\me$, and using the formula \eqref{min formula} completes the proof.
\end{proof}

We omit the proof of the following lemma as it is a straightforward consequence of \eqref{s-pi}.

\begin{lemma}\label{pi_s gyujtolemma}
	Let $(X,\|\cdot\|)$ be a separable real Banach space, $\mu\in\Fx\setminus\Delta_X$ and $s>0$. 
	Let us denote the convex hull of $S_\mu$ by $K$, and assume that $\xk$ is a vertex of $K$.
	Set $\lk := \mu(\{\xk\}) \in (0,1)$.
	Then for every $\vartheta\in\me$ with $S_\vartheta\subseteq K$ the following two conditions are equivalent:
	\begin{itemize}
		\item[(i)] $\vartheta = \lk\delta_{\xk}+(1-\lk)\widetilde{\vartheta}$ where $\widetilde{\vartheta}\in\me$ with $S_{\widetilde{\vartheta}}\subseteq K\setminus B_r(\xk)$ for some $r>0$,
		\item[(ii)] there exist a number $0 < \rho \leq s(1-\lk)$ and a half-line $\mathfrak{e}$ starting from $\xk$ such that $W_{s,\vartheta}|_{\mathfrak{e}}$ has the following form:
			$$W_{s,\vartheta}|_{\mathfrak{e}}(x)=\left\{
			\begin{array}{ll}
			s &\quad\mbox{if} \quad \|x-\xk\|\geq s,\\
			\|x-\xk\| &\quad\mbox{if} \quad s(1-\lk)<\|x-\xk\|<s,\\
			s(1-\lk) &\quad\mbox{if}\quad s(1-\lk)-\rho\leq\|x-\xk\|\leq s(1-\lk).
			\end{array}    
			\right.$$
	\end{itemize}
	As a consequence we have that if $\mu\in\Fx, \#S_\mu\leq 2$, $\nu\in\me$ and $W_{s,\mu} \equiv W_{s,\nu}$, then $\mu=\nu$.
\end{lemma}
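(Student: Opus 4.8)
The plan is to deduce both assertions directly from the $s=1$ results of the previous major step, applied not to $\|\cdot\|$ but to the rescaled norm $|||\cdot||| := \tfrac{1}{s}\|\cdot\|$, exactly as suggested by the identity \eqref{s-pi}. Since $\|\cdot\|$ and $|||\cdot|||$ are equivalent, $(X,|||\cdot|||)$ is again a separable real Banach space with the same Borel $\sigma$-algebra, the same topology, hence the same supports, the same convex hull $K$ of $\spm$, and the same vertex $\xk$. Writing $\pi_{|||\cdot|||}$ for the ordinary L{\'e}vy--Prokhorov metric of $(X,|||\cdot|||)$ and $W^{|||\cdot|||}_\vartheta(x):=\pi_{|||\cdot|||}(\delta_x,\vartheta)$ for its witness function, the identity \eqref{s-pi} specialised to Dirac measures reads
\begin{equation*}
W_{s,\vartheta}(x) = \pi_{s,\|\cdot\|}(\delta_x,\vartheta) = s\cdot\pi_{|||\cdot|||}(\delta_x,\vartheta) = s\cdot W^{|||\cdot|||}_\vartheta(x) \qquad (\forall\; x\in X).
\end{equation*}
This one relation, together with $\|x-\xk\| = s\cdot|||x-\xk|||$, is the only mechanism needed.

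For the equivalence of (i) and (ii), I would first note that condition (i) is norm-independent: it merely requires $\xk$ to be separated from $S_{\widetilde\vartheta}$, and this holds for some $\|\cdot\|$-ball if and only if it holds for some $|||\cdot|||$-ball, the two norms being equivalent. Hence (i) is literally the hypothesis (i) of Proposition \ref{burok} read in the space $(X,|||\cdot|||)$. Applying that proposition in $(X,|||\cdot|||)$ yields that (i) is equivalent to the existence of $0<\rho'\leq 1-\lk$ and a half-line $\mathfrak{e}$ from $\xk$ along which $W^{|||\cdot|||}_\vartheta$ has the shape \eqref{eq:tanufv} with $|||x-\xk|||$ in place of $\|x-\xk\|$. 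Multiplying this profile by $s$ and substituting $|||x-\xk||| = \tfrac{1}{s}\|x-\xk\|$ turns the three regimes $|||x-\xk|||\geq 1$, $1-\lk<|||x-\xk|||<1$, $1-\lk-\rho'\leq|||x-\xk|||\leq 1-\lk$ into $\|x-\xk\|\geq s$, $s(1-\lk)<\|x-\xk\|<s$, $s(1-\lk)-s\rho'\leq\|x-\xk\|\leq s(1-\lk)$, with respective values $s$, $\|x-\xk\|$, $s(1-\lk)$. Setting $\rho := s\rho'$ reproduces precisely the claimed form of $W_{s,\vartheta}|_{\mathfrak{e}}$, and the bound $0<\rho'\leq 1-\lk$ becomes $0<\rho\leq s(1-\lk)$. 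Since $\rho'\mapsto s\rho'$ is a bijection of the admissible ranges, the equivalence transfers in both directions.

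For the final consequence, dividing the displayed identity by $s>0$ shows that $W_{s,\mu}\equiv W_{s,\nu}$ if and only if $W^{|||\cdot|||}_\mu\equiv W^{|||\cdot|||}_\nu$. The ``moreover'' part of Corollary \ref{corollary ketpont} is a purely metric statement about a separable real Banach space, so applying it to $(X,|||\cdot|||)$ --- whose witness functions are exactly the $W^{|||\cdot|||}$ --- gives $\mu=\nu$ whenever $\mu\in\Fx$, $\#S_\mu\leq 2$ and $W^{|||\cdot|||}_\mu\equiv W^{|||\cdot|||}_\nu$, which is what we need.

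I do not expect a genuine obstacle: the content is entirely carried by the scaling identity \eqref{s-pi}, and the work reduces to the bookkeeping of substituting $\|x-\xk\| = s|||x-\xk|||$ into the three cases and tracking the parameter change $\rho = s\rho'$. The only point needing a line of care is verifying that condition (i) really is insensitive to the choice between $\|\cdot\|$ and $|||\cdot|||$, which is immediate from the equivalence of the two norms. This matches the paper's own assessment that the lemma is a straightforward consequence of \eqref{s-pi}.
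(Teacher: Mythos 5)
Your proposal is correct and follows exactly the route the paper intends: the authors omit the proof with the remark that it is a straightforward consequence of \eqref{s-pi}, i.e.\ the rescaling $\pi_{s,\|\cdot\|} = s\cdot\pi_{\frac{1}{s}\|\cdot\|}$, which is precisely your reduction to Proposition \ref{burok} and the \lq\lq moreover'' part of Corollary \ref{corollary ketpont} in the renormed space $\left(X,\tfrac{1}{s}\|\cdot\|\right)$. Your bookkeeping (the substitution $\|x-\xk\| = s\,|||x-\xk|||$, the parameter change $\rho = s\rho'$, and the norm-independence of condition (i) and of $K$ and its vertices) is accurate, so the argument is complete.
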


Next, let us suppose for a moment that $m\in\mathbb{N}$ pieces of atoms of $\vartheta\in\me$ have been already detected.
(For instance by Lemma \ref{burok}, if $\vartheta = \varphi(\mu)$ with $\mu\in\Fx$ then the atoms of $\vartheta$ in the vertices of the convex hull of $S_\vartheta$ can be detected.)
Our aim with the forthcoming lemma is to describe a modified witness function of the remaining part of $\vartheta$ in terms of the (original) L{\'e}vy--Prokhorov distances between $\vartheta$ and some measures which are supported on at most $m+1$ points.
This will be later utilised in order to explore the action of $\varphi$ on $\Fx$.

\begin{lemma}\label{tanu} 
Let $(X,\|\cdot\|)$ be a separable real Banach space and $\vartheta\in\me$. 
Let $x\in X$ and $\{y_{j,l}\,|\,1\leq j\leq k, 1\leq l \leq d_j\}\subset X$ be some pairwise different points such that
$$
\rho_j := \|x-y_{j,1}\| = \|x-y_{j,l}\| \qquad (\forall\; 1\leq l \leq d_j)
$$
holds for every $1\leq j\leq k$, 
$$
\rho_j > \rho_{j+1} > 0 \qquad (\forall\; 1\leq j\leq k-1),
$$ 
and 
$$
w_{j,l} := \vartheta(\{y_{j,l}\}) > 0 \qquad (\forall\; 1\leq j\leq k, 1\leq l \leq d_j).
$$
We also set 
$$
w_j := \sum_{l=1}^{d_j} w_{j,l} = \vartheta(\{y_{j,1},\dots,y_{j,d_j}\}) \qquad (\forall\; 1\leq j\leq k),
$$
$$
\widetilde{w} := 1-\sum_{j=1}^kw_j
$$
and 
\begin{align}\label{def:eta}
\eta_{r}:=\sum_{j=1}^{r}\sum_{l=1}^{d_j}w_{j,l}\cdot\delta_{y_{j,l}} + \bigg(1-\sum_{j=1}^{r} w_j\bigg)\cdot\delta_x\in\Fx \qquad (\forall\; 0\leq r\leq k).
\end{align}
Furthermore, denote by $\widetilde{\vartheta}\in\me$ the measure which satisfies
\begin{align}\label{def:teta}
\vartheta=\sum_{j=1}^k\sum_{l=1}^{d_j}w_{j,l}\cdot\delta_{y_{j,l}}+\widetilde{w}\cdot\widetilde{\vartheta}.
\end{align}

Then the $\widetilde{w}$-witness function of $\widetilde{\vartheta}$ can be expressed in terms of the L{\'e}vy--Prokhorov distances of $\vartheta$ and $\eta_r$'s in the following way:
\begin{align}\label{eq:folemma}
W_{\widetilde{w},\widetilde{\vartheta}}(x) = 
	\left\{
        \begin{array}{ll}
            \pi(\delta_x,\vartheta) & \; \mbox{\textrm{if }} x\mbox{ is not }(P_1) \\
            \pi(\eta_r,\vartheta) &\; \mbox{\textrm{if }} x\mbox{ is }(P_r)\mbox{ but not }(P_{r+1})\mbox{ with some }1\leq r < k\\
            \pi(\eta_k,\vartheta) & \; \mbox{\textrm{if }} x\mbox{ is }(P_k)
        \end{array}    
    \right.
\end{align}
where for every $1\leq r\leq k$ the property $(P_r)$ means
\begin{align}\label{Pr}\tag{$P_r$}
\pi(\eta_{r-1},\vartheta)\leq\rho_r.
\end{align}
\end{lemma}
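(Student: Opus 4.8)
The plan is to reduce every term appearing in \eqref{eq:folemma} to a single one-dimensional comparison along the radii $\rho_1>\dots>\rho_k$, and then to decide which of the three cases occurs by locating that comparison among the $\rho_j$'s.

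First I would rewrite the left-hand side. Since $W_{\widetilde{w},\widetilde{\vartheta}}(x)=\pi_{\widetilde{w}}(\delta_x,\widetilde{\vartheta})$ and $\delta_x\in\Fx$, Lemma \ref{s-min} applies (the degenerate case $\widetilde{\vartheta}=\delta_x$ gives the value $0$ and is immediate), and because the only non-trivial test set is $A=\{x\}$ it yields
\begin{equation*}
W_{\widetilde{w},\widetilde{\vartheta}}(x)=\min\left\{\varepsilon>0\;\big|\;\widetilde{w}\leq\widetilde{w}\cdot\widetilde{\vartheta}\big(\overline{B_\varepsilon(x)}\big)+\varepsilon\right\}.
\end{equation*}
Writing $h(\varepsilon):=\widetilde{w}\cdot\widetilde{\vartheta}\big(X\setminus\overline{B_\varepsilon(x)}\big)$, a non-increasing right-continuous function, this value is exactly $E:=\min\{\varepsilon>0\mid h(\varepsilon)\leq\varepsilon\}$.

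The heart of the matter is to express each $\pi(\eta_r,\vartheta)$ in the same language. For $\eta_r\in\Fx$ with $\eta_r\neq\vartheta$, Proposition \ref{min} describes $\pi(\eta_r,\vartheta)$ as the least $\varepsilon$ for which $\eta_r(A)\leq\vartheta(\overline{A^\varepsilon})+\varepsilon$ holds for every $A\subseteq S_{\eta_r}=\{x\}\cup\{y_{j,l}\mid j\leq r\}$. I would first show that, as long as $0<\varepsilon<\rho_r$, the binding test set is $A=\{x\}$: a set avoiding $x$ has non-positive deficit $\eta_r(A)-\vartheta(\overline{A^\varepsilon})$ because $\vartheta$ dominates $\eta_r$ on the retained atoms; and for $A=\{x\}\cup C$ with $C\subseteq\{y_{j,l}\mid j\leq r\}$, every atom of $C$ lies outside $\overline{B_\varepsilon(x)}$ (since $\rho_j\geq\rho_r>\varepsilon$) but inside $\overline{A^\varepsilon}$, so $\vartheta(\overline{A^\varepsilon})\geq\vartheta(\overline{B_\varepsilon(x)})+\eta_r(C)$ and the deficit of $A$ does not exceed that of $\{x\}$. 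Evaluating the surviving inequality at $A=\{x\}$, and using $\eta_r(\{x\})=1-\sum_{j\leq r}w_j$ together with $\vartheta(\overline{B_\varepsilon(x)})=\widetilde{w}\cdot\widetilde{\vartheta}(\overline{B_\varepsilon(x)})+\sum_{j:\rho_j\leq\varepsilon}w_j$, turns it, for $\varepsilon<\rho_r$, into
\begin{equation*}
h(\varepsilon)+H_r(\varepsilon)\leq\varepsilon,\qquad H_r(\varepsilon):=\sum_{j>r,\ \rho_j>\varepsilon}w_j.
\end{equation*}
Hence, whenever the corresponding minimiser lies below $\rho_r$, one has $\pi(\eta_r,\vartheta)=E_r:=\min\{\varepsilon>0\mid h(\varepsilon)+H_r(\varepsilon)\leq\varepsilon\}$. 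Confirming that $A=\{x\}$ is genuinely binding is the step I expect to be the main obstacle, because once $\varepsilon$ reaches $\rho_r$ a retained atom can be drawn into $\overline{B_\varepsilon(x)}$ and the clean formula breaks down; keeping the minimiser inside $(0,\rho_r)$ is precisely what rules this out.

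It remains to match these formulas to the trichotomy. Since $H_r\geq H_{r+1}\geq0$ with $H_k\equiv0$, and $\eta_0=\delta_x$, the values satisfy $E_0\geq E_1\geq\dots\geq E_k=E$. Put $r^\ast:=\#\{j\mid\rho_j>E\}$, so that $\rho_{r^\ast}>E\geq\rho_{r^\ast+1}$ under the conventions $\rho_0:=+\infty$, $\rho_{k+1}:=0$. Two observations then close the argument. First, for every $r\leq r^\ast$, evaluating the $\eta_{r-1}$-inequality at $\varepsilon=\rho_r$ gives $H_{r-1}(\rho_r)=0$ and $h(\rho_r)\leq h(E)\leq E<\rho_r$; since $\rho_r<\rho_{r-1}$ this places the minimiser of $E_{r-1}$ inside the clean range, so $\pi(\eta_{r-1},\vartheta)=E_{r-1}\leq\rho_r$, i.e.\ property $(P_r)$ holds. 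Second, for every $r\geq r^\ast$ one has $H_r(E)=0$, hence $E_r=E$. Combining these, if $r$ is the index singled out in \eqref{eq:folemma} (the largest one with $(P_r)$, or $r=0$ when $(P_1)$ fails), then $r\geq r^\ast$ and $E_r=E$; in the generic situation $r=r^\ast$, so $E<\rho_{r^\ast}$ keeps us in the clean range and $\pi(\eta_r,\vartheta)=E=W_{\widetilde{w},\widetilde{\vartheta}}(x)$, exactly as claimed, while the failure of $(P_1)$ corresponds to $r^\ast=0$ and $\pi(\delta_x,\vartheta)=E_0=E$. The borderline coincidence $E=\rho_{r^\ast+1}$ (where the selected index slides to $r^\ast+1$) only shifts the bookkeeping and is settled by evaluating the defining inequality directly at $\varepsilon=E$, which once more returns the common value $E$.
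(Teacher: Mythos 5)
Your proposal is correct, and its engine is the same as the paper's: your ``binding test set'' reduction on $(0,\rho_r)$ is exactly the paper's Part 1 (equivalence of the multi-set condition with the singleton inequality at $A=\{x\}$), and your translation of that inequality into $h(\varepsilon)+H_r(\varepsilon)\leq\varepsilon$ is the computation the paper carries out inside Parts 3--5. What is genuinely different is the architecture of the endgame. The paper argues forwards from the hypotheses: assuming $x$ is $(P_r)$ but not $(P_{r+1})$, it traps $\pi_{\widetilde{w}}(\delta_x,\widetilde{\vartheta})$ in $(\rho_{r+1},\rho_r]$ and splits into an interior case and the boundary case $\pi_{\widetilde{w}}(\delta_x,\widetilde{\vartheta})=\rho_r$, the latter settled by an indirect argument (inequality \eqref{hasonlolesz}); separately, its Part 2 proves $(P_r)\Rightarrow(P_{r-1})$ so that the case split in \eqref{eq:folemma} is well defined. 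You argue backwards: compute $E=W_{\widetilde{w},\widetilde{\vartheta}}(x)$ first, locate it among the radii via $r^\ast$, and read off which $(P_r)$ hold; this buys a transparent monotone structure $E_0\geq\cdots\geq E_k=E$ and unifies the paper's three cases into one mechanism. Two caveats, both fillable inside your framework. First, your parenthetical ``the largest $r$ with $(P_r)$'' silently presumes the initial-segment property of the $(P_r)$'s (the paper's Part 2); in your language you must also show that $(P_r)$ fails for $r\geq r^\ast+2$, which follows in one line by testing the clean-range criterion for $\eta_{r-1}$ at $\varepsilon=\rho_r<\rho_{r-1}$, where $h(\rho_r)+H_{r-1}(\rho_r)-\rho_r\geq h(\rho_r)-\rho_r>0$ since $\rho_r<E$. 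Second, the borderline $E=\rho_{r^\ast+1}$ is precisely the paper's hardest subcases (4(b) and 5(b)), and ``evaluating the defining inequality directly at $\varepsilon=E$'' is not quite a one-liner: at $\varepsilon=E$ the atoms $y_{r^\ast+1,l}$ lie on the sphere, hence inside $\overline{B_E(x)}$, so your singleton reduction breaks down and the upper bound $\pi(\eta_{r^\ast+1},\vartheta)\leq E$ requires the full multi-set check, in which the loss $\vartheta\bigl(C\cap\overline{B_E(x)}\bigr)\leq w_{r^\ast+1}$ is cancelled by $\eta_{r^\ast+1}(\{x\})-\vartheta\bigl(\overline{B_E(x)}\bigr)=h(E)-w_{r^\ast+1}$; this cancellation is the actual content of the paper's indirect step. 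With these two additions your argument is complete, and arguably tidier than the original case-by-case verification.
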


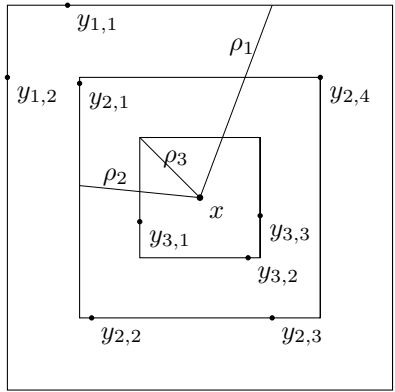
\begin{figure}[H]
	\begin{tikzpicture}[scale=0.8]
	%\draw[help lines] (0,0) grid (2,2);
	%point x = (0,0)
	\draw[fill] (0,0) circle [radius=1.3pt];
	\node [below right, black] at (0,0) {$x$};
	%circles in \ell^\infty norm
	\draw [thin] (1,1) -- (1,-1) -- (-1,-1) -- (-1,1) -- (1,1) -- (1,-1);
	%	\draw [thin] (1.5,1.5) -- (1.5,-1.5) -- (-1.5,-1.5) -- (-1.5,1.5) -- (1.5,1.5) -- (1.5,-1.5);
	\draw [thin] (2,2) -- (2,-2) -- (-2,-2) -- (-2,2) -- (2,2) -- (2,-2);
	\draw [thin] (3.2,3.2) -- (3.2,-3.2) -- (-3.2,-3.2) -- (-3.2,3.2) -- (3.2,3.2) -- (3.2,-3.2);
	%sugars
	\draw [ultra thin] (0,0) -- (-1,1);
	\node [black] at (-0.4,0.65) {$\rho_3$};
	\draw [ultra thin] (0,0) -- (-2,0.2);
	\node [black] at (-1.4,0.35) {$\rho_2$};	
	\draw [ultra thin] (0,0) -- (1.2,3.2);
	\node [black] at (0.7,2.5) {$\rho_1$};
	%points on the first circle
	\draw[fill] (1,-0.3) circle [radius=1pt];
	\node [below right, black] at (1,-0.3) {$y_{3,3}$};
	\draw[fill] (0.8,-1) circle [radius=1pt];
	\node [below right, black] at (0.8,-1) {$y_{3,2}$};
	\draw[fill] (-1,-0.4) circle [radius=1pt];
	\node [below right, black] at (-1,-0.4) {$y_{3,1}$};
	%points on the second circle
	\draw[fill] (2,2) circle [radius=1pt];
	\node [below right, black] at (2,2) {$y_{2,4}$};
	\draw[fill] (1.2,-2) circle [radius=1pt];
	\node [below right, black] at (1.2,-2) {$y_{2,3}$};
	\draw[fill] (-1.8,-2) circle [radius=1pt];
	\node [below right, black] at (-1.8,-2) {$y_{2,2}$};	
	\draw[fill] (-2,1.9) circle [radius=1pt];
	\node [below right, black] at (-2,1.9) {$y_{2,1}$};
	%points on the third circle
	\draw[fill] (-3.2,2) circle [radius=1pt];
	\node [below right, black] at (-3.2,2) {$y_{1,2}$};
	\draw[fill] (-2.2,3.2) circle [radius=1pt];
	\node [below right, black] at (-2.2,3.2) {$y_{1,1}$};
	\end{tikzpicture}
	\caption{An illustration when $X=\mathbb{R}^2$ with the $\ell^\infty$-norm.}
\end{figure}

\begin{remark}
	It is extremely important to observe that the subscripts in the lemma above highly depend on the actual position of $x$. 
	For instance on Figure 1 with that particular $x$ we have $k=3$. 
	However, if $x$ is moved slightly to the right, then $k$ becomes 7. 
	In particular, this changes \eqref{Pr} and therefore \eqref{eq:folemma} as well.
\end{remark}

\begin{proof}[Proof of Lemma \ref{tanu}]
We split our proof into five parts.

\emph{Part 1.} First, we prove that for each $1\leq r \leq k$ and $0 < \varepsilon < \rho_r$ we have 
\begin{align}\label{ineq:A}
\eta_r(A) \leq \vartheta(\overline{A^\varepsilon}) + \varepsilon \quad (\forall\;A\subseteq S_{\eta_r})
\end{align}
if and only if
\begin{align}\label{ineq:singlex}
\eta_r(\{x\}) \leq \vartheta(\overline{B_\varepsilon(x)}) + \varepsilon
\end{align}
is satisfied. 
One direction is obvious.
In order to see the reverse implication, observe that \eqref{ineq:A} holds trivially if $x\notin A$. On the other hand, if $x\in A$, then \eqref{ineq:singlex} yields \eqref{ineq:A} for this $A$ by the following estimation:
$$
\eta_r(A) = \eta_r(\{x\}) + \eta_r(A\setminus\{x\}) \leq \vartheta(\overline{B_\varepsilon(x)}) + \varepsilon + \vartheta(A\setminus\{x\}) \leq \vartheta(\overline{A^\varepsilon})+\varepsilon.
$$

\emph{Part 2.} 
Here we show that the right-hand side of \eqref{eq:folemma} is well defined. 
First, we observe that by Proposition \ref{min} $x$ is \eqref{Pr} if and only if
\begin{align}\label{ineq:eta r-1}
\eta_{r-1}(A)\leq\vartheta(\overline{A^{\rho_r}})+\rho_r \quad (\forall\;A\subseteq S_{\eta_{r-1}}).
\end{align}
But by Part 1, this is equivalent to the following inequality:
\begin{align}\label{Pr'}\tag{$P_r'$}
1-\sum_{j=1}^{r-1}w_j\leq\vartheta(\overline{B_{\rho_r}(x)})+\rho_r.
\end{align}

Next, let $2\leq r\leq k$. In order to see the well-definedness, it is enough to show that if $x$ is $(P_r)$, then $x$ is also $(P_{r-1})$. 
So assume that $x$ is $(P_r)$. 
Since $\rho_r<\rho_{r-1}$ and 
$$
w_{r-1} = \sum_{l=1}^{d_{r-1}}w_{r-1,l} = \vartheta(\{y_{r-1,1},\dots y_{r-1,d_{r-1}}\}) \leq\vartheta\left(\overline{B_{\rho_{r-1}}(x)}\setminus \overline{B_{\rho_{r}}(x)}\right),
$$
we obtain
\begin{align}\label{Pr-1'}\tag{$P_{r-1}'$}
1-\sum_{j=1}^{r-2}w_j \leq \vartheta(\overline{B_{\rho_{r-1}}(x)})+\rho_{r-1}.
\end{align}
Therefore $x$ is indeed $(P_{r-1})$.

\emph{Part 3.}
Next, we verify \eqref{eq:folemma} in case when $x$ is not $(P_1)$, i.e. $\pi(\delta_x,\vartheta)>\rho_1$. 
Observe that since
$$
1 > \vartheta(\overline{B_{\rho_1}(x)})+\rho_1 = \widetilde{w} \cdot \widetilde{\vartheta}(\overline{B_{\rho_1}(x)})+\sum_{i=1}^kw_i+\rho_1,
$$
we have
$$
\widetilde{w} > \widetilde{w}\cdot\widetilde{\vartheta}(\overline{B_{\rho_1}(x)})+\rho_1,
$$
and thus 
$$
\pi_{\widetilde{w}}(\delta_x,\widetilde{\vartheta})>\rho_1
$$ 
follows.
Using this fact we obtain
\begin{equation*}
\begin{split}
\pi_{\widetilde{w}}(\delta_x,\widetilde{\vartheta})& = \min\left\{\varepsilon>\rho_1\,\big|\,\widetilde{w}\leq\widetilde{w}\cdot\widetilde{\vartheta}(\overline{B_{\varepsilon}(x)})+\varepsilon\right\}\\
& = \min\left\{\varepsilon>\rho_1\,\bigg|\,1\leq\sum_{j=1}^kw_j+\widetilde{w}\cdot\widetilde{\vartheta}(\overline{B_{\varepsilon}(x)})+\varepsilon\right\}\\
& = \min\left\{\varepsilon>\rho_1\,\big|\,1\leq\vartheta(\overline{B_{\varepsilon}(x)})+\varepsilon\right\}\\
& = \min\left\{\varepsilon>0\,\big|\,1\leq\vartheta(\overline{B_{\varepsilon}(x)})+\varepsilon\right\} = \pi(\delta_x,\vartheta),
\end{split}
\end{equation*}
which completes this part.

\emph{Part 4.} 
We proceed to show \eqref{eq:folemma} in the case when $x$ is $(P_r)$ but not $(P_{r+1})$ with some $1\leq r < k$.
As in the previous part, first we estimate the value of $\pi_{\wh}(\delta_x,\tetah)$. 
According to the re-phrasing \eqref{Pr'} and the assumption, we have
\begin{align*}
1-\sum_{j=1}^{r-1}w_j\leq\vartheta(\overline{B_{\rho_r}(x)})+\rho_r =
\sum_{i=r}^kw_i+\widetilde{w}\cdot\widetilde{\vartheta}(\overline{B_{\rho_r}(x)})+\rho_r
\end{align*}
and
\begin{align*}
1-\sum_{j=1}^{r}w_j>\vartheta(\overline{B_{\rho_{r+1}}(x)})+\rho_{r+1}=\sum_{i=r+1}^kw_i+\widetilde{w}\cdot\widetilde{\vartheta}(\overline{B_{\rho_{r+1}}(x)})+\rho_{r+1}.
\end{align*}
Observe that these inequalities are equivalent to
\begin{align*}
\widetilde{w}\leq\widetilde{w}\cdot\widetilde{\vartheta}(\overline{B_{\rho_r}(x)})+\rho_r
\end{align*}
and
\begin{align*}
\widetilde{w}>\widetilde{w}\cdot\widetilde{\vartheta}(\overline{B_{\rho_{r+1}}(x)})+\rho_{r+1},
\end{align*}
respectively.
Thus we conclude that
\begin{align*}
\rho_{r+1}<\pi_{\widetilde{w}}(\delta_x,\widetilde{\vartheta})\leq\rho_r.
\end{align*}
In particular, $\widetilde{\vartheta}$ is different from $\delta_x$, and we have
\begin{align}
\label{eq:x}\pi_{\ww}(\delta_x,\tteta) = \min\left\{\rho_{r+1}<\varepsilon\leq\rho_r \,\big|\, \ww\leq\ww\cdot\tteta(\bex)+\varepsilon\right\}.
\end{align}

From now on we consider two cases: (a) when $\rho_{r+1} < \pi_{\wh}(\delta_x,\tetah)<\rho_r$, and (b) when $\pi_{\wh}(\delta_x,\tetah)=\rho_r$. 
Assume first that (a) is fulfilled.
Then \eqref{eq:x} becomes
\begin{equation*}
\begin{split}
\pi_{\ww}\left(\delta_x,\tteta\right)& = \min\left\{\rho_{r+1}<\varepsilon<\rho_r \,\Big|\, 1-\sum_{i=1}^rw_i\leq \sum_{i=r+1}^k w_i+\ww\cdot\tteta(\bex)+\varepsilon\right\}\\
& = \min\left\{\rho_{r+1}<\varepsilon<\rho_r \,\Big|\, 1-\sum_{i=1}^r w_i\leq \vartheta(\bex)+\varepsilon\right\}\\
& = \min\left\{\rho_{r+1}<\varepsilon<\rho_r\,\big|\,\eta_r(\{x\})\leq\vartheta(\overline{\{x\}^{\varepsilon}})+\varepsilon\right\}\\
& = \min\left\{\rho_{r+1}<\varepsilon<\rho_r \,\big|\,\forall\; A\subseteq S_{\eta_r}\colon \eta_r(A)\leq \vartheta(\overline{A^{\varepsilon}})+\varepsilon\right\} \quad \text{(by Part 1)}\\
& = \min\left\{\varepsilon > 0\,|\,\forall\; A\subseteq S_{\eta_r}\colon \eta_r(A)\leq \vartheta(\overline{A^{\varepsilon}})+\varepsilon\right\}\\
& = \pi(\eta_r,\vartheta),
\end{split}
\end{equation*}
which is exactly the desired equation. 
Second, suppose that (b) is satisfied. 
Consequently, we have 
$$
1-\sum_{j=1}^k w_j=\ww>\ww\cdot\tteta(\bex)+\varepsilon \quad (\forall\;\rho_{r+1}<\varepsilon<\rho_r),
$$
whence
$$
\eta_r(\{x\}) = 1-\sum_{i=1}^r w_i>\sum_{i=r+1}^k w_k+\ww\cdot\tteta(\bex)+\varepsilon=\vartheta(\bex)+\varepsilon
$$
follows for every $\rho_{r+1}<\varepsilon<\rho_r$. 
In particular, we get 
$$
\pi(\eta_r,\vartheta)\geq\rho_r.
$$
Finally, we verify that the converse inequality holds as well. 
Suppose indirectly that there exists an $A\subseteq S_{\eta_r}$ such that
$$
\eta_r(A)>\vartheta(\overline{A^{\rho_r}})+\rho_r.
$$
Clearly, $x\notin A$ contradicts the above inequality, thus $x\in A$ follows.
Therefore we have
\begin{align*}
\eta_r(A)>\vartheta(\overline{A^{\rho_r}})+\rho_r&\geq \vartheta(A\cup\overline{B_{\rho_r}(x)})+\rho_r\\
&\geq\vartheta(A
\setminus\{x,y_{r,1},\dots,y_{r,d_r}\})+\vartheta(\brx)+\rho_r\\
&=\eta_r(A\setminus\{x,y_{r,1},\dots,y_{r,d_r}\})+\vartheta(\overline{B_{\rho_r}(x)})+\rho_r.
\end{align*}
Consequently,
\begin{align}\label{hasonlolesz}
\begin{split}
1-\sum_{j=1}^{r-1}w_j & = \eta_r(\{x,y_{r,1},\dots,y_{r,d_r}\}) \\
& \geq \eta_r(A) - \eta_r(A\setminus\{x,y_{r,1},\dots,y_{r,d_r}\}) >\vartheta(\brx)+\rho_r,
\end{split}
\end{align}
which contradicts \eqref{Pr}. 
This completes the present part. 

\emph{Part 5.} Finally, we prove \eqref{eq:folemma} when $x$ is $(P_k)$, i.e. $\pi(\eta_{k-1},\vartheta)\leq\rho_k$. 
We have to show that $\pi(\eta_{k},\vartheta)=\pi_{\ww}(\delta_x,\tteta)$. 
Because of the assumption, we have
$$
1-\sum_{j=1}^{k-1}w_j\leq\vartheta(\bkx)+\rho_k=w_k+\ww\cdot\tteta(\bkx)+\rho_k
$$
which implies $\ww\leq\ww\cdot\tteta(\bkx)+\rho_k$, and hence, 
$$
\pi_{\ww}(\delta_x,\tteta)\leq\rho_k.
$$ 
We consider three cases: (a) when $0<\pi_{\ww}(\delta_x,\tteta)<\rho_k$, (b) when $\pi_{\ww}(\delta_x,\tteta)=\rho_k$, and (c) when $\pi_{\ww}(\delta_x,\tteta)=0$. 
First, let us suppose (a).
In this case we have
\begin{equation*}
\begin{split}
\pi_{\ww}(\delta_x,\tteta)& = \min\left\{0<\varepsilon<\rho_k\,\big|\,\ww\leq\ww\cdot\tteta(\bex)+\varepsilon\right\}\\
& = \min\left\{0<\varepsilon<\rho_k\,\big|\,\eta_k(\{x\}) \leq\vartheta(\bex)+\varepsilon\right\}\\
& = \min\left\{0<\varepsilon<\rho_k\,|\,\forall\; A\subseteq S_{\eta_{k}}\colon \eta_{k}(A)\leq\vartheta(\overline{A^{\varepsilon}})+\varepsilon\right\} \quad \text{(by Part 1)}\\
& = \pi(\eta_{k},\vartheta).
\end{split}
\end{equation*}
Second, we assume (b). 
Let us observe the following for every $\varepsilon<\rho_k$:
$$
\eta_{k}(\{x\})=\ww>\ww\cdot\tteta(\bex)+\varepsilon=\vartheta(\bex)+\varepsilon,
$$
which implies $\pi(\eta_{k},\vartheta)\geq\rho_k$. To show the converse inequality, i.e. $\pi(\eta_k,\vartheta)\leq\rho_k$, assume indirectly that there exists an $A\subseteq S_{\eta_{k}}$ such that
$$\eta_{k}(A)>\vartheta(\overline{A^{\rho_k}})+\rho_k.$$
Very similarly, as in the verification of \eqref{hasonlolesz}, we conclude that this inequality contradicts $(P_k)$.
Finally, the case (c) is trivial.
\end{proof}

Since the modified witness function is obviously continuous, we also know the value of $W_{\ww,\tteta}(x)$ when $x\in\left\{y_{j,l}\,|\, 1\leq j\leq k, 1\leq l\leq d_j\right\}$.
Therefore if $m$ pieces of atoms of $\vartheta\in\me$ have been already detected, then a modified witness function of the remaining part of $\vartheta$ can be calculated in terms of the L{\'e}vy--Prokhorov distances between $\vartheta$ and some measures supported on a set of at most $m+1$ points.

%%%%%%%%%%%%%%%%%%%%%%%%%%%

\subsection{Final major step: the action on $\Fx$ and $\me$}
Now, we are in the position to verify our main result. 

\begin{proof}[Proof of Main Theorem]
Recall that we assumed \eqref{FIX} and that our aim is to show that $\varphi$ is the identity map.
Observe that it is enough to prove that $\varphi$ acts identically on $\Fx$, as $\Fx$ is a weakly dense subset of $\me$ and $\varphi$ is continuous.
In order to do this we use induction on the cardinality of the support of $\mu\in\Fx$.
By Corollary \ref{corollary ketpont} our map $\varphi$ fixes all measures with an at most two-element support.
Let $k\in\mathbb{N}, k\geq 2$ and assume that we had already proved the following:
\begin{equation}\label{maxksupp}
\varphi(\nu) = \nu \qquad (\forall\; \nu\in\me, \# S_\nu \leq k).
\end{equation} 
Let us consider a measure
$$
\mu = \sum_{i=1}^{k+1}\lambda_i\delta_{x_i} \in \Fx
$$
where the $x_i$'s are pairwise different, $\sum_{i=1}^{k+1}\lambda_i=1$ and each $\lambda_i$ is positive.
Assume also that for every $1\leq i \leq k$ the point $x_i$ lies outside of the convex hull of $\{x_j\}_{j=i+1}^{k+1}$. 
Let us use the following notations in the sequel:
$$
\vartheta := \varphi(\mu) \quad\text{and}\quad \mu^{(i)} := \frac{1}{\sum_{j=i+1}^{k+1} \lambda_j}\cdot \sum_{j=i+1}^{k+1}\lambda_j\delta_{x_j} \qquad (0\leq i\leq k).
$$
By Proposition \ref{burok} we observe that the support of $\vartheta$ is contained in the convex hull of $S_\mu = \{x_j\}_{j=1}^{k+1}$.

Now, we prove step by step that each $x_i$ is an atom of $\varphi(\mu)$ with the same weight $\lambda_i$.
By \eqref{invariancia} we have $W_{\mu}\equiv W_{\vartheta}$, thus an application of Proposition \ref{burok} gives
\begin{align*}
\vartheta = \lambda_1\cdot \delta_{x_1} + (1-\lambda_1)\cdot \vartheta^{(1)},
\end{align*}
with a measure $\vartheta^{(1)}\in\me$ such that $x_1\notin S_{\vartheta^{(1)}}$ and $S_{\vartheta^{(1)}}$ lies in the convex hull of $S_{\mu} = \{x_j\}_{j=1}^{k+1}$.
Utilising Lemma \ref{tanu} and \eqref{maxksupp} for measures with supports of at most 2 elements we obtain
$$
W_{1-\lambda_1,\mu^{(1)}} \equiv W_{1-\lambda_1,\vartheta^{(1)}}.
$$
At this point, if $k$ was 2, then $\# S_{\mu^{(1)}} = 2$, thus by Lemma \ref{pi_s gyujtolemma} the measures $\mu^{(1)}$ and $\vartheta^{(1)}$ coincide, and therefore $\mu = \varphi(\mu)$ is yielded.
Otherwise, applying Lemma \ref{pi_s gyujtolemma} for the measures $\mu^{(1)}$ and $\vartheta^{(1)}$ gives
\begin{align*}
\vartheta = \lambda_1\cdot \delta_{x_1} + \lambda_2\cdot \delta_{x_2} + (1-\lambda_1-\lambda_2)\cdot \vartheta^{(2)},
\end{align*}
with a measure $\vartheta^{(2)}\in\me$ such that $x_2\notin S_{\vartheta^{(2)}}$ and $S_{\vartheta^{(2)}}$ lies in the convex hull of $S_{\mu^{(1)}} = \{x_j\}_{j=2}^{k+1}$.
Using Lemma \ref{tanu} and \eqref{maxksupp} for the case when the cardinality of the support is at most 3, we obtain
$$
W_{1-\lambda_1-\lambda_2,\mu^{(2)}} \equiv W_{1-\lambda_1-\lambda_2,\vartheta^{(2)}}.
$$
Iterating this procedure, the conclusion of the $(k-2)^{\text{nd}}$ step is the following: 
\begin{equation}\label{k-2}
W_{1-\sum_{i=1}^{k-2}\lambda_i,\mu^{(k-2)}} \equiv W_{1-\sum_{i=1}^{k-2}\lambda_i,\vartheta^{(k-2)}}
\end{equation}
where 
$$
\vartheta = \sum_{i=1}^{k-2}\lambda_i\delta_{x_i} + \bigg(1-\sum_{i=1}^{k-2}\lambda_i\bigg)\cdot \vartheta^{(k-2)}
$$
such that $\vartheta^{(k-2)}\in\me$, $x_{k-2}\notin S_{\vartheta^{(k-2)}}$ and $S_{\vartheta^{(k-2)}}$ lies in the convex hull of $S_{\mu^{(k-3)}} = \{x_{k-2},x_{k-1},x_k,x_{k+1}\}$.
Utilising Lemma \ref{pi_s gyujtolemma} for the measures $\mu^{(k-2)}$ and $\vartheta^{(k-2)}$ we get that
$$
\vartheta = \sum_{i=1}^{k-1}\lambda_i\delta_{x_i} + \bigg(1-\sum_{i=1}^{k-1}\lambda_i\bigg)\cdot \vartheta^{(k-1)}
$$
with some $\vartheta^{(k-1)}\in\me$, $x_{k-1}\notin S_{\vartheta^{(k-1)}}$ and $S_{\vartheta^{(k-1)}}$ lies in the convex hull of $S_{\mu^{(k-2)}} = \{x_{k-1},x_k,x_{k+1}\}$.
Furthermore, by Lemma \ref{tanu} and \eqref{maxksupp} we obtain
\begin{equation}\label{k-1}
W_{1-\sum_{i=1}^{k-1}\lambda_i,\mu^{(k-1)}} \equiv W_{1-\sum_{i=1}^{k-1}\lambda_i,\vartheta^{(k-1)}}.
\end{equation}
But since $\#S_{\mu^{(k-1)}} = 2$, Lemma \ref{pi_s gyujtolemma} and \eqref{k-1} imply $\mu^{(k-1)} = \vartheta^{(k-1)}$, and therefore we conclude $\varphi(\mu) = \mu$, completing the proof.
\end{proof}

%%%%%%%%%%%%%%%%%%%%%%%%%%%
%%%%%%%%%%%%%%%%%%%%%%%%%%%

\section{Concluding remarks}
We noted at the end of Section \ref{2} that it is possible to give a characterisation of surjective $\pi$-isometries on certain subsets of $\me$.
Namely, let $\mathcal{S}\subset\me$ be a weakly dense subset (possibly disjoint from $\Delta_X$), and assume that $\phi\colon\mathcal{S}\to\mathcal{S}$ is onto and satisfies
$$
\pi(\phi(\mu),\phi(\nu)) = \pi(\mu,\nu) \qquad (\forall\;\mu,\nu\in\mathcal{S}).
$$
Since $(\me,\pi)$ is a complete metric space, there exists a unique isometric extension $\varphi\colon\me\to\me$, i.e. $\varphi|_{\me} = \phi$. 
Clearly, $\varphi$ is a $\pi$-isometry which maps $\me$ into $\me$.
Observe that $\varphi[\me]$ is closed in $\me$.
On the other hand, as $\mathcal{S} = \varphi[\mathcal{S}] \subset \varphi[\me]$, we infer $\varphi[\me] = \me$.
Therefore $\varphi\colon\me\to\me$ is induced by a surjective isometry $\psi\colon X\to X$, whence we conclude the same for $\phi\colon\mathcal{S}\to\mathcal{S}$, i.e.
$$
\left(\phi(\mu)\right)(A) = \mu(\psi^{-1}[A]) \qquad (\forall\; \mu\in\mathcal{S}, A\in\bx).
$$

We proceed to mention some typical examples of weakly dense subsets of $\me$ (for which the above statement holds).
1) The set of all \emph{discrete Borel probability measures}, which is the collection of those $\mu\in\me$ that are concentrated on a countable subset of $X$.
2) The class of all \emph{continuous Borel probability measures}, i.e. those $\mu\in\me$ such that $\mu(\{x\}) = 0$ for every $x\in X$.
3) Let $n\in\mathbb{N}$, $X = \mathbb{R}^n$ and $\|\cdot\|$ be an arbitrary norm on $\mathbb{R}^n$. Since any two norms on $\mathbb{R}^n$ are equivalent, the Borel $\sigma$-algebra $\mathscr{B}_{\mathbb{R}^n}$ does not depend on $\|\cdot\|$. We say that $\mu\in\mathcal{P}_{\mathbb{R}^n}$ is an \emph{absolutely continuous Borel probability measure} if it is absolutely continuous with respect to the usual Lebesgue measure on $\mathbb{R}^n$. This set is clearly weakly dense in $\mathcal{P}_{\mathbb{R}^n}$, as every element of $\Fx$ can be approximated.

Next, as we have mentioned in the introduction, the most important special cases of our result are the following: 
1) when $X$ is an infinite dimensional, separable real Hilbert space;
2) when $X$ is the real Banach space $C([0,1])$; and
3) when $X$ is an $n$-dimensional Euclidean space ($n\in\mathbb{N}$).
We make some comments on how our proof could be modified in these cases.
In the first two cases the underlying Banach spaces are of infinite dimension, hence the support of any $\mu\in\Fx$ lies in a finite dimensional affine subspace.
In case of 1) the equivalence in Proposition \ref{burok} can be done for every element $\xk$ in $S_\mu$ by choosing a half-line $\mathfrak{e}$ orthogonal to that affine subspace.
Therefore the proof becomes much simpler as we immediately obtain that every $\mu\in\Fx$ is fixed by $\varphi$.
A similar argument simplifies the proof for general strictly convex infinite dimensional separable Banach spaces.
In case of 2) the space is of infinite dimension but the norm is not strictly convex.
Despite of this obstacle the proof still can be shortened by utilising the Lindenstrauss--Troyansky theorem \cite{Bourgain,Lindenstrauss,Troyanski}.
Namely, if $\mu\in\Fx$ and $S_\mu$ is contained in the kernel of a strongly exposing functional (for the definition see e.g. \cite{Bourgain}), then the equivalence part of Proposition \ref{burok} can be verified for every element $\xk$ in $S_\mu$. 
Since by the Lindenstrauss--Troyansky theorem it is easy to see that every $\mu\in\Fx$ can be weakly approximated by such measures, we easily complete the proof of the Main Theorem in this case too.
It seems that for finite dimensional spaces, even for the case of 3), we really have to do the whole procedure presented in Section \ref{3}, or at least we are not aware of any shortening possibilities.

Finally, we note that throughout Section \ref{3} there were some parts where we considered general complete and separable metric spaces.
But later on most of our techniques required that the underlying space had a linear structure.
In our opinion it would be interesting to find a characterisation of all surjective $\pi$-isometries in the setting of other special (but still general enough) kinds of complete separable metric spaces.

%%%%%%%%%%%%%%%%%%%%%%%%%%%
%%%%%%%%%%%%%%%%%%%%%%%%%%%

\end{document}